\documentclass[11pt]{article}
\usepackage{amsmath,amsfonts,amsthm,amscd,amssymb,graphicx}
\usepackage{subfigure}

\numberwithin{equation}{section}
%\numberwithin{equation}{part}
%\numberwithin{subsection}{part}

\usepackage{hyperref}
\usepackage{verbatim}
\usepackage{color}
\setlength{\evensidemargin}{0in} \setlength{\oddsidemargin}{0in}
\setlength{\textwidth}{6.5in} \setlength{\topmargin}{0in}
\setlength{\textheight}{8in}
%
%TODO: comment out for final printing... useful for proofreading
%while writing...
%\usepackage{showkeys}

\newtheorem{theorem}{Theorem}[section]

\newtheorem{lemma}[theorem]{Lemma}

\newtheorem{proposition}[theorem]{Proposition}
\newtheorem{prop}[theorem]{Proposition}

\def\eps{\varepsilon }

\newcommand{\RR}{\mathbb{R}}

\newcommand{\CC}{\mathbb{C}}

\newcommand{\ZZ}{{\mathbb Z}}
\newcommand{\TT}{{\mathbb T}}
%\newcommand{\CC}{{\mathbb C}}

%%%%%%%%%%%%%%%%%%%%%%%%%%%%%%%%%%%%%%%%%%%%%%%%%%

\def\beq{\begin{equation}}
\def\eeq{\end{equation}}
\def\bb1{{1\!\!1}}

\def\cL{\mathcal{L}}
\def\cW{\mathcal{W}}

%

%\\def\cU{{\check{U}}}

%CHANGED:

%\def\bU{{\hat {U}}}
%\def\bW{{\hat{W}}}
%ENDCHANGED
%\def\cW{{\hat{W}}}

%

%

%

\def\R{\mbox{Re }}

\def\w{{\omega}}

\def\pt{\partial}

\newcommand{\T}{{\mathbb T}}

%\newcommand{\tx}{\widetilde x}

%CHANGED:

% trinh shotcut

\def\eps{\varepsilon}

\def\triangle{\Delta}
\def\bega{\begin{aligned}}
\def\enda{\end{aligned}}
\def\R{\mathbb{R}^2}

\def\lw{\left}
\def\rw{\right}

\def\R{\mathbb{R}}
\def\wtd{\widetilde}

\def\Z{\mathbb{Z}}
%%%
\def\bcase{\begin{cases}}
\def\ecase{\end{cases}}
\def\al{\alpha}
\def\bmx{\begin{bmatrix}}
\def\emx{\end{bmatrix}}

%CB shortcut begins here

\def \R{{\mathbb{R}}}

\def \Z{{\mathbb{Z}}}

\def \T{{\mathbb{T}}}

\def \del {\partial}

 % correction

\begin{document}

\title{The inviscid limit for the $2d$ Navier-Stokes equations in bounded domains}
\author {Claude Bardos\footnotemark[1] \and
Trinh T. Nguyen\footnotemark[2]
\and Toan T. Nguyen\footnotemark[3]
\and Edriss S. Titi\footnotemark[4]
}

\maketitle

\renewcommand{\thefootnote}{\fnsymbol{footnote}}

\footnotetext[1]{Laboratoire J.-L. Lions, Sorbonne Universit\'e,
BC 187, 4 place Jussieu, 75252 Paris, Cedex 05, France. Email: claude.bardos@gmail.com
}

\footnotetext[2]{Department of Mathematics, University of Southern California, LA, CA 90089.
Email: tnguyen5@usc.edu. TN is partly supported by the AMS-Simons Travel Grant Award.
}

\footnotetext[3]{Department of Mathematics, Penn State University, State College, PA 16803. Email: nguyen@math.psu.edu. TN is partly supported by the NSF under grant DMS-2054726.}

\footnotetext[4]{Department of Mathematics, Texas A\&M University, 3368 TAMU, College Station, TX 77843-3368, USA. Department of Applied Mathematics and Theoretical Physics, University of Cambridge, Cambridge CB3 0WA, U.K. ALSO, Department of Computer Science and Applied Mathematics, Weizmann Institute of Science, Rehovot 76100, Israel.
Emails: titi@math.tamu.edu; Edriss.Titi@maths.cam.ac.uk

}

\begin{center}
\em In memory of Robert T. Glassey
\end{center}

\bigskip

\begin{abstract}

We prove the inviscid limit for the incompressible Navier-Stokes equations for data that are analytic only near the boundary in a general two-dimensional bounded domain. Our proof is direct, using the vorticity formulation with a nonlocal boundary condition, the explicit semigroup of the linear Stokes problem near the flatten boundary, and the standard wellposedness theory of Navier-Stokes equations in Sobolev spaces away from the boundary.

\end{abstract}

\tableofcontents

\section{Introduction}

We are interested in the inviscid limit of solutions to the incompressible Navier-Stokes equations (NSE)
\begin{equation}\label{NS}
\begin{aligned}
\partial_t u + u \cdot \nabla u + \nabla p &= \nu \Delta u,
\\
\nabla \cdot u & = 0,
\end{aligned}
\end{equation}
in a bounded domain $\Omega \subset \RR^2$, with initial data $u_{\vert_{t=0}} = u_0(x)
$ and with the no-slip boundary condition
\begin{equation}\label{NS-BC}
u|_{\partial \Omega} =0.
\end{equation}
In the inviscid limit: $\nu \to 0$, one would intuitively expect  that the solutions $u_\nu$, of problem \eqref{NS}-\eqref{NS-BC}, converge to the corresponding solutions of the Euler equations of ideal incompressible fluids
\begin{equation}\label{Euler}
\begin{aligned}
\partial_t u + u \cdot \nabla u + \nabla p &= 0, \quad \hbox{in} \quad \Omega,
\\
\nabla \cdot u & = 0, \quad \hbox{in} \quad \Omega,
\\
u\cdot n& = 0, \quad \hbox{on} \quad \partial \Omega,
\end{aligned}
\end{equation}
where $n$ denotes the unit normal vector to the boundary pointing inward. However, the inviscid limit for problem \eqref{NS}-\eqref{NS-BC} is strenuous and remains open due to the appearance of boundary layers and strong shear near the boundary that  triggers the shedding of unbounded vorticity by the boundary. In their celebrated work \cite{CS98}, Caflisch and Sammartino establish the boundary layer expansion and the inviscid limit for analytic data on the half-plane. Maekawa \cite{Maekawa} proved a similar result that allows Sobolev data whose vorticity is supported away from the boundary. The result and its proof was recently simplified \cite{NN18} and extended in \cite{KVW20,KNVW}, which allow data that are only analytic near the boundary.

In this paper, we prove the inviscid limit of \eqref{NS}-\eqref{NS-BC} for data that are only analytic near the boundary of a general bounded analytic domain in $\RR^2$, thus further extending \cite{CS98, Maekawa, NN18, KVW20} from the case of half-plane to bounded domains with analytic boundaries. Precisely, we assume that

\begin{itemize}

\item $\Omega$ is a simply-connected bounded domain in $\RR^2$ whose boundary $\del\Omega$ is an analytic curve, defined by an analytic map: $\theta \in \T= \R/(\Z L)  \mapsto  x(\theta) =(x_1(\theta),x_2(\theta) ) \in \del\Omega\,.$

\end{itemize}

The analyticity of the boundary naturally extends to an analytic map which maps the near-boundary part of the domain $\{x\in \Omega: \,\,d(x,\partial \Omega)<\delta\}$ to the case of half-plane $(z,\theta)\in  (0,\delta)\times \mathbb{T}$, where $z$ is the distance function from the boundary. Here, for sake of presentation, we have chosen to consider the case of simply-connected domain $\Omega$. The results of this paper apply to the general setting of multi-connected bounded domains whose boundaries consist of closed analytic curves, i.e., including domains with holes. Our analysis near each of the boundaries is close to that on the half-plane. A crucial assumption, however, lies on the analyticity of initial data near the boundary, which appears to be sharp.

{\em The work is dedicated to the memory of Professor Robert T. Glassey, who was a  great mathematician, a {close} friend, and an inspiring teacher.}

\subsection{Boundary vorticity formulation}\label{sec-vorticityBC}

We shall work with the boundary vorticity formulation \cite{Anderson,Maekawa,NN18}. Precisely, let $u=(u_1,u_2)$ be the velocity vector field and $\omega = \nabla^\bot \cdot u = \partial_{x_2} u_1 - \partial_{x_1} u_2$ be the corresponding vorticity. Then, the vorticity equation reads
\beq\begin{aligned}\label{NS-vor}
\partial_{t}\omega+ u\cdot\nabla\omega&=\nu\Delta\omega,\\
u &= \nabla^\perp \Delta^{-1} \omega, \qquad \hbox{(the Biot-Savart law).} 
\end{aligned}
\eeq
Here and throughout the paper, $\Delta^{-1}$ denotes the inverse of the Laplacian operator in $\Omega$ subject to the zero Dirichlet boundary condition. Evidently, this, together with  the Biot-Savart law, imply the impermeability boundary condition $u\cdot n=0$ on $\partial \Omega$.
To ensure the full no-slip boundary condition, i.e., that $u\cdot \tau=0$ on the boundary $\partial \Omega$, where $\tau$ in the unit tangent vector to the boundary, we first require that the initial data satisfy the no-slip boundary condition \eqref{NS-BC}, and then we impose in addition that  $\partial_t u\cdot \tau= 0$ on the boundary, $\partial \Omega$, for all positive time. This leads to the boundary condition
\beq \label{h-BC}
0 =\tau\cdot \partial_t u = \tau\cdot \nabla^\perp \Delta^{-1} \partial_t \omega = \partial_n [\Delta^{-1} (\nu \Delta \omega - u \cdot \nabla \omega)]
\eeq
on the boundary. Introduce $\omega^*$ to be the solution of the nonhomogeneous Dirichlet  boundary-value problem
\begin{equation}\label{def-omegas}\left\{
\begin{aligned}\Delta \omega^* &= 0, \qquad \mbox{in}\quad \Omega
\\
{\omega^*} &=\omega, \qquad \mbox{on }\quad \partial \Omega.
\end{aligned}
\right.
\end{equation}
and define the Dirichlet-Neumann operator by
\begin{equation}\label{def-DN}
DN \omega = -\partial_n \omega^*, \qquad \mbox{on}\quad \partial \Omega,
\end{equation}
where $\omega^*$ solves \eqref{def-omegas}. Observe that $\partial_n [\Delta^{-1}\Delta \omega] =\partial_n [\Delta^{-1}\Delta (\omega - \omega^*)]= (\partial_n + DN)\omega$. Thus, by virtue of the boundary condition \eqref{h-BC} the boundary condition on vorticity reads
\beq\label{BC-vor}
\nu (\partial_n + DN)\omega_{\vert_{\partial \Omega}} = [\partial_n \Delta^{-1} ( u \cdot \nabla \omega)]
_{\vert_{\partial \Omega}},
\eeq
together with the Biot-Savart law \eqref{NS-vor}.

Throughout this paper, we shall deal with the Navier-Stokes solutions that solve \eqref{NS-vor}-\eqref{def-DN}, or equivalently \eqref{NS-vor} and \eqref{BC-vor}. Such a solution will  be constructed via the Duhamel's integral representation, treating the nonlinearity as a source term. As we observed earlier the boundary condition $u\cdot n=0$ on $\partial \Omega$ follows from the Biot-Savart law and the definition of $\Delta^{-1}$ with the zero Dirichlet boundary condition.

\subsection{Main results}

Our main result reads as follows.

\begin{theorem} \label{theo-main} Let $u_0\in H^5(\Omega)$ be an initial data that vanishes on the boundary. We assume that the initial vorticity $\omega_0$ is analytic near the boundary $\partial \Omega$ (see Section \ref{sec-norm}). 
 Then, there is a positive time $T$, independent of $\nu$, so that the unique solution $u_\nu(t)$ to the Navier-Stokes problem \eqref{NS}-\eqref{NS-BC}, for every $\nu >0$, with initial data $u_0$,  exists on $[0,T]$ and {has vorticity $\omega_\nu = \nabla^\perp\cdot u_\nu$ that remains analytic near the boundary, and satisfies} 
\begin{equation}\label{vorticity-bd}
\lim_{\nu\rightarrow 0} \sqrt\nu \|\omega_\nu\|_{L^\infty([0,T]\times\partial \Omega))} < \infty.
\end{equation}
Moreover, in the inviscid limit as $\nu \to 0$, $u_\nu$ converges strongly in $L^\infty([0,T];L^p(\Omega))$, for any $2\le p<\infty$, to the corresponding solution $u$ of the Euler equations \eqref{Euler} with the same initial data $u_0$. 
\end{theorem}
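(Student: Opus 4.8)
The plan is to construct $\omega_\nu$ directly through a Duhamel iteration built on the \emph{explicit} semigroup of the linearized problem near the boundary, and to close the estimates in a scale of analytic norms whose tangential radius of analyticity shrinks in time. First I would use the analytic boundary map to flatten the collar $\{d(x,\partial\Omega)<\delta\}$ to a half-plane strip $(z,\theta)\in(0,\delta)\times\T$ (with $z$ the distance to $\partial\Omega$), and introduce a partition of unity $\chi_b+\chi_i=1$ separating a near-boundary region from the interior. Away from the collar the vorticity is merely transported by a uniformly bounded velocity and smoothed by the benign dissipation $\nu\Delta$, so I expect standard $H^s$ energy estimates to control $\chi_i\omega_\nu$ uniformly in $\nu$, the coupling to the collar entering only through commutators supported in the overlap region, where the analytic norm already provides control. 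The genuine difficulty is therefore confined to the collar.

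In the collar I would work with the boundary vorticity formulation \eqref{NS-vor}, \eqref{BC-vor}, writing
\beq
\omega_\nu(t)=S_\nu(t)\omega_0+\int_0^t S_\nu(t-s)\,N[\omega_\nu](s)\,ds,
\eeq
where $S_\nu$ is the semigroup of the linear Stokes problem $\partial_t\omega=\nu\Delta\omega$ subject to the nonlocal Robin-type condition $\nu(\partial_n+DN)\omega=g$ on $\partial\Omega$, $N[\omega]=-u\cdot\nabla\omega$, and the boundary datum $g=[\partial_n\Delta^{-1}(u\cdot\nabla\omega)]|_{\partial\Omega}$ is fed back through $S_\nu$. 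The key linear step is to derive $S_\nu$ explicitly on the flattened half-plane as a one-dimensional heat kernel in $z$ (Fourier in $\theta$) plus a boundary-layer correction carrying the $\nu(\partial_n+DN)$ condition, and then to establish its mapping properties: the parabolic gain of one conormal derivative at the cost of a factor $(\nu(t-s))^{-1/2}$, together with the boundary smoothing that turns a source $g=O(1)$ into vorticity of size $O(\nu^{-1/2})$ pointwise on $\partial\Omega$ (consistent with \eqref{vorticity-bd}) yet $O(1)$ after one integration against the Biot--Savart kernel.

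The central estimate is the nonlinear closure. Writing $u\cdot\nabla\omega=\nabla\cdot(u\omega)$ and using $\nabla\cdot u=0$, I would bound both the interior nonlinearity and the boundary source in a time-dependent analytic norm $\|\cdot\|_{\rho(t)}$ with $\rho(t)=\rho_0-\beta t$, so that the single tangential derivative lost in $N[\omega]$ is absorbed by the Cauchy--Kovalevskaya gain $\rho(s)-\rho(t)\gtrsim\beta(t-s)$, while the conormal derivative and the boundary-layer weights are absorbed by the parabolic factor $(\nu(t-s))^{-1/2}$ supplied by $S_\nu$. The hard part --- and the precise reason analyticity near the boundary is indispensable --- is that the boundary layer carries vorticity of size $\nu^{-1/2}$, so $N[\omega_\nu]$ is formally of the same $\nu^{-1/2}$ order as $\omega_\nu$ itself: there is \emph{no} $\nu$-smallness to exploit, and the iteration closes only because the analytic weights control arbitrarily many tangential derivatives with no $\nu$-dependent loss, which any finite Sobolev regularity would forfeit. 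A contraction-mapping argument then yields a unique solution, analytic near $\partial\Omega$, on an interval $[0,T]$ with $T$ and all bounds independent of $\nu$, and the pointwise boundary estimate gives \eqref{vorticity-bd}.

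For the inviscid limit I would exploit these uniform bounds. Away from the collar, $\omega_\nu$ converges to the Euler vorticity (by the interior estimates), and passing to the limit in \eqref{NS-vor} identifies any accumulation point of $u_\nu$ with the unique $2d$ Euler solution of \eqref{Euler} carrying the data $u_0$, so the whole family converges there. Within the collar the no-slip condition drives $u_\nu$ from $0$ at the wall to the Euler slip value across a layer of width $O(\sqrt\nu)$, so the difference $u_\nu-u$ has amplitude $O(1)$ but is concentrated, to leading order, in a strip of measure $O(\sqrt\nu)$. Hence $\|u_\nu-u\|_{L^p(\Omega)}^p=O(\sqrt\nu)\to0$ for every finite $p$, giving strong convergence in $L^\infty([0,T];L^p(\Omega))$ for $2\le p<\infty$; the amplitude remaining $O(1)$ rather than vanishing is exactly why the convergence must fail at $p=\infty$, in accordance with the $O(\nu^{-1/2})$ boundary vorticity of \eqref{vorticity-bd}.
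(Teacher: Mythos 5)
Your construction of the solution is essentially the paper's own proof: flattening the analytic collar, the cutoff decomposition into a near-boundary piece measured in analytic norms and an interior piece handled by $H^s$ energy estimates, the vorticity formulation with the nonlocal condition $\nu(\partial_n+DN)\omega=g$, the Duhamel iteration on the explicit Stokes semigroup (heat kernel plus boundary corrector), the shrinking analyticity radius $\rho(t)=\rho_0-\beta t$ to absorb the derivative loss \`a la Cauchy--Kovalevskaya, and the $O(\nu^{-1/2})$ pointwise boundary vorticity coming from the $(\nu(t-s))^{-1/2}$ cost of one normal derivative of the semigroup. All of this matches Sections 2--4 of the paper, up to reformulating the paper's quadratic inequality $A(\beta)\le C_0\|\omega_0\|+C_0\beta^{-1}A(\beta)^2$ as a contraction argument.

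The genuine gap is in your passage to the inviscid limit. You claim that $u_\nu-u$ has amplitude $O(1)$ but is ``concentrated, to leading order, in a strip of measure $O(\sqrt\nu)$,'' and conclude $\|u_\nu-u\|_{L^p(\Omega)}^p=O(\sqrt\nu)$. That concentration statement is a Prandtl-expansion-type assertion that your uniform bounds do not deliver: what you actually control is the $\cL^1_\rho$-type norm of $\omega_\nu$ on a collar of \emph{fixed} width $\delta$ (so $O(1)$ vorticity mass there, with $O(\nu^{-1/2})$ size only pointwise at the wall), and nothing in these estimates prevents the vorticity shed by the boundary from invading an $O(1)$ neighborhood --- which is precisely the mechanism that could destroy the inviscid limit. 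Likewise, your compactness argument identifies the limit only away from the collar; inside the collar, identifying the limit is the whole difficulty, so the argument is circular as stated. The paper closes this step by a different and much softer route, its Theorem \ref{theo-Kato}: an energy inequality for $u_\nu-u$ plus Gronwall reduces everything to the vanishing of the single boundary term in \eqref{omegagronwall}, namely $\int_0^T\int_{\partial\Omega}\nu\,\omega_\nu\,(u\cdot\tau)\,d\sigma\,dt\to 0$, which is immediate from \eqref{vorticity-bd} since $\nu\|\omega_\nu\|_{L^\infty([0,T]\times\partial\Omega)}=O(\sqrt\nu)$. This yields strong convergence in $L^\infty([0,T];L^2(\Omega))$; the $L^p$ convergence for $2\le p<\infty$ then follows by interpolating with the uniform $L^\infty$ bound on $u_\nu-u$ supplied by your elliptic estimates, with no strip-concentration claim needed. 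You should replace your collar heuristic with this energy argument --- it uses only bounds you have already established.
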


{The fact that Euler solutions remain analytic near the boundary is a classical result \cite{BB,KV11}, which is a direct consequence of the main theorem.
}
The main difficulty in establishing the inviscid limit is to control the vorticity on the boundary and derive uniform estimates such as \eqref{vorticity-bd}, which is the main contribution of this paper. The inviscid limit then follows easily. In fact, a much weaker bound than \eqref{vorticity-bd} is sufficient to guarantee the convergence of solutions to the Navier-Stokes to a corresponding solution of the Euler equations. Precisely, we have the following simple Kato's type theorem.

\begin{theorem} \label{theo-Kato} Let $T>0$ and  $u$ be a weak solution to the Euler equations \eqref{Euler} in $[0,T]\times \Omega$  satisfying $\| \nabla u \|_{L^\infty([0,T]\times \Omega) } <\infty$. Suppose that, for every $\nu >0$, $u_\nu$ are  Leray weak solutions to the Navier-Stokes problem \eqref{NS}-\eqref{NS-BC} on $[0,T]\times \Omega$,  satisfying
\begin{equation}\label{NSE-energy}
\sup_{0<t<T}  \|u_\nu(t)\|^2_{L^2(\Omega)} + \nu\int_0^T\|\nabla_x u_\nu(t)\|_{L^2(\Omega)}dt\le C_0,
\end{equation}
uniformly in $\nu \to 0$. Assume that the vorticity  $\omega_\nu=\nabla^\bot\cdot u_\nu$ satisfies
\begin{equation}\label{omegagronwall}
\limsup_{\nu \to 0}  \Big ( - \int_0^T\int_{\del\Omega}   \nu \omega_\nu(t,\sigma)   u(t,\sigma) \cdot \tau(\sigma)  d\sigma dt\Big) =0,
\end{equation}
then any $\overline u_\nu$, which is a {\bf weak$-*$ limit} in $L^\infty([0,T]; L^2(\Omega))$  of a subsequence $u_{\nu_j}$ of the Leray weak solutions, as $\nu_j \to 0$, satisfies the stability estimate:
\begin{equation}\label{Stability}
 \|\overline{u_\nu(t)}-u(t)\|_{L^2(\Omega)}^2 \le e^{2t \| \nabla u \|_{L^\infty([0,T]\times \Omega)}} \|\overline{u_\nu(0)}-u(0)\|_{L^2(\Omega)}^2 .
\end{equation}
In particular, if $u_\nu(0)\rightarrow u(0)$ in $ L^2(\Omega)$, as $\nu \to 0$, then $u_\nu$ converges strongly to $u$ in $L^\infty([0,T];L^2(\Omega)).$ 
\end{theorem}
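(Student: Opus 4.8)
The plan is to run the classical Kato energy argument, comparing the Navier--Stokes flow $u_\nu$ with the Euler flow $u$ through $w_\nu := u_\nu - u$, and to show that the only obstruction to strong convergence is precisely the boundary term quantified in \eqref{omegagronwall}. For each fixed $\nu>0$ the two-dimensional Navier--Stokes solution is smooth, so every manipulation below is a genuine identity at fixed $\nu$; the entire difficulty is uniformity as $\nu\to 0$ together with the passage to the weak-$*$ limit. I would expand $\|w_\nu\|_{L^2}^2 = \|u_\nu\|_{L^2}^2 - 2\int_\Omega u_\nu\cdot u\,dx + \|u\|_{L^2}^2$ and differentiate in time. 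The energy balance for $u_\nu$ gives $\tfrac12\tfrac{d}{dt}\|u_\nu\|_{L^2}^2 = -\nu\|\nabla u_\nu\|_{L^2}^2$, while $\tfrac12\tfrac{d}{dt}\|u\|_{L^2}^2 = 0$ since $u$ is divergence free and tangent to $\partial\Omega$. For the cross term I substitute the two equations and integrate by parts: the pressure terms drop because $\nabla\cdot u_\nu = \nabla\cdot u = 0$ and $u\cdot n = u_\nu\cdot n = 0$ on $\partial\Omega$; the two convective terms combine, after one integration by parts using $\nabla\cdot u_\nu=0$, into $\int_\Omega(w_\nu\cdot\nabla u)\cdot u_\nu\,dx$, and since $w_\nu\cdot n=0$ on $\partial\Omega$ one has $\int_\Omega(w_\nu\cdot\nabla u)\cdot u\,dx = \tfrac12\int_\Omega w_\nu\cdot\nabla|u|^2\,dx = 0$, so the nonlinear contribution reduces to $\int_\Omega(w_\nu\cdot\nabla u)\cdot w_\nu\,dx$, which is bounded in absolute value by $\|\nabla u\|_{L^\infty}\|w_\nu\|_{L^2}^2$.

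The crucial term is the viscous one, $\nu\int_\Omega\Delta u_\nu\cdot u\,dx$. Integrating by parts produces an interior piece $-\nu\int_\Omega\nabla u_\nu:\nabla u\,dx$, which combined with $-\nu\|\nabla u_\nu\|_{L^2}^2$ and Young's inequality is at most $\tfrac{\nu}{2}\|\nabla u\|_{L^2}^2 \le C\nu \to 0$, and a boundary piece $\nu\int_{\partial\Omega}(\partial_n u_\nu)\cdot u\,d\sigma$. Here I use that $u_\nu$ vanishes identically on $\partial\Omega$: this forces all tangential derivatives of $u_\nu$ to vanish there, and with $\nabla\cdot u_\nu=0$ it yields $\partial_n u_\nu\cdot n = 0$ together with the clean trace identity $\omega_\nu|_{\partial\Omega} = \partial_n u_\nu\cdot\tau$ (up to the orientation-dependent sign; the curvature corrections all carry a factor of $u_\nu$ and hence vanish on the boundary). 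Since $u$ is tangential on $\partial\Omega$, the boundary piece equals $\nu\int_{\partial\Omega}\omega_\nu\,(u\cdot\tau)\,d\sigma$. Collecting everything gives the differential inequality
\begin{equation*}
\tfrac12\tfrac{d}{dt}\|w_\nu\|_{L^2}^2 \le \|\nabla u\|_{L^\infty}\|w_\nu\|_{L^2}^2 + C\nu - \nu\int_{\partial\Omega}\omega_\nu\,(u\cdot\tau)\,d\sigma ,
\end{equation*}
whose boundary term is exactly the integrand appearing in \eqref{omegagronwall}.

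Next I apply Gr\"onwall at fixed $\nu$ to obtain
\begin{equation*}
\|w_\nu(t)\|_{L^2}^2 \le e^{2t\|\nabla u\|_{L^\infty}}\|w_\nu(0)\|_{L^2}^2 + \int_0^t e^{2(t-s)\|\nabla u\|_{L^\infty}}\Big(2C\nu - 2\nu\int_{\partial\Omega}\omega_\nu(u\cdot\tau)\,d\sigma\Big)\,ds ,
\end{equation*}
and then pass to the limit along the subsequence $\nu_j\to 0$. On the left, weak-$*$ lower semicontinuity of the $L^2$ norm gives $\|\overline{u_\nu}(t)-u(t)\|_{L^2}^2 \le \liminf_j\|w_{\nu_j}(t)\|_{L^2}^2$; on the right the $C\nu$ term vanishes, the first term converges to $e^{2t\|\nabla u\|_{L^\infty}}\|\overline{u_\nu}(0)-u(0)\|_{L^2}^2$, and the boundary contribution is controlled by \eqref{omegagronwall}, forcing it to be nonpositive in the limit. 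This yields the stability estimate \eqref{Stability}, and the final convergence assertion follows at once since $\|\overline{u_\nu}(0)-u(0)\|_{L^2}=0$ whenever the initial data converge.

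I expect the main obstacle to be this last step: reconciling the pointwise-in-$\nu$ Gr\"onwall bound with the weak-$*$ limit, and in particular showing that the time-integrated boundary term contributes nonpositively. The boundary integrand has no definite sign, so one cannot simply take absolute values; instead one must exploit \eqref{omegagronwall} over the relevant subintervals $[0,t]$ (equivalently, strip off the Gr\"onwall weight by an integration by parts in time, $\int_0^t e^{2(t-s)\|\nabla u\|_{L^\infty}}b_\nu(s)\,ds = B_\nu(t) + 2\|\nabla u\|_{L^\infty}\!\int_0^t e^{2(t-s)\|\nabla u\|_{L^\infty}}B_\nu(s)\,ds$ with $B_\nu(s)=\int_0^s b_\nu$), and control the resulting signed partial integrals using only the Leray energy bound \eqref{NSE-energy}. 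The trace identity $\omega_\nu|_{\partial\Omega}=\partial_n u_\nu\cdot\tau$, where the full force of the no-slip condition enters, is the other point demanding care.
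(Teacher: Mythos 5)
Your argument is correct and is essentially the paper's own proof: the paper starts from the same Kato-type energy inequality (quoted from \cite{BT13} rather than rederived), uses the identical boundary trace identity $(\partial_n u_\nu)\cdot u = \omega_\nu\,(u\cdot\tau)$ on $\partial\Omega$, and concludes via Gronwall, Banach--Alaoglu and assumption \eqref{omegagronwall}, exactly as you do. The subtlety you flag at the end --- that \eqref{omegagronwall} controls the sign-indefinite boundary term only over the full interval $[0,T]$, while the Gronwall argument needs its contribution on each subinterval $[0,t]$ --- is genuine, but the paper's proof passes over it silently (it simply ``invokes Gronwall's Lemma'' and the assumption), so your proposal is, if anything, more candid than the original on this point rather than missing a step the paper supplies.
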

\begin{proof} An elementary manipulation (e.g., \cite{BT13}) yields the following energy inequality
\begin{equation}
\begin{aligned}
&\|u_\nu(t)- u(t) \|_{L^2(\Omega)}^2 +   \nu  \int_0^t  \|\nabla u_\nu(s) \|^2_{L^2(\Omega)} ds
\\&\le
   \|u_\nu(0) -u(0)\|_{L^2(\Omega)}^2 +   \nu   \int_0^t \|\nabla u (s) \|^2_{L^2(\Omega)} ds
-  \int_0^t \int_{\del\Omega}\nu (\partial_n u_\nu (s,\sigma) ) \cdot u(s,\sigma) \; d\sigma ds
 \\& + \int_0^t\int_{\Omega}|\Big((  \nabla u +\nabla^\bot u  ) (u_\nu - u)\Big)\cdot(u_\nu-u))| dx ds
 \\&\le
   \|u_\nu(0) -u(0)\|_{L^2(\Omega)}^2 +   \nu   \int_0^t \|\nabla u (s) \|^2_{L^2(\Omega)} ds
-  \int_0^t \int_{\del\Omega}\nu \omega_\nu (s,\sigma) (u(s,\sigma)\cdot \tau(\sigma)) \; d\sigma ds
 \\& + 2 \| \nabla u \|_{L^\infty([0,T]\times \Omega) }\int_0^t  \|u_\nu(s)- u(s) \|_{L^2(\Omega)}^2  \, ds,
\end{aligned}
\end{equation}
where in the third term in the right-hand side of the last inequality we used the fact that $(\partial_n u_\nu  ) \cdot u = \omega_\nu (u \cdot \tau)$ on the boundary. 
Let  $u_{\nu_j}$ be a subsequence which  converges  weak$-*$ in $L^\infty([0,T]; L^2(\Omega))$, as $\nu_j \to 0$. We apply the above energy inequality to $u_{\nu_j}$ and  invoke Gronwall's Lemma. Observe that since the Leray weak solutions belong to $C([0,T]; L^2(\Omega))$ then $\|u_\nu(0)\|^2_{L^2(\Omega)}\le C_0$ by virtue of \eqref{NSE-energy}. Thanks to the Banach-Alaoglu Theorem and  assumption \eqref{omegagronwall} we conclude \eqref{Stability}.  The last part of the theorem is an immediate consquence of \eqref{Stability}.
\end{proof}

\subsection{Remarks}

As mentioned in the introduction, our main results extend the previous works \cite{CS98, Maekawa, NN18, KVW20} from the case of the half-plane to bounded domains. The analyticity near the boundary is required to control the unbounded vorticity in the inviscid limit. It may be possible to extend the present analysis to include the propagation of boundary layers and the classical Prandtl's boundary layer expansions, whose validity near general boundary layers again requires analyticity.

The first such a result was due to the celebrated work by Asano \cite{Asano} and Sammartino-Caflisch \cite{CS98}, where the boundary layer expansion was established for data on the half-plane that are analytic in both horizontal and vertical variables. When constructing solutions to the Prandtl equation, the analyticity in the vertical variable can be dropped \cite{CanS}. It is not known however if such an assumption can be dropped at the level of Navier-Stokes equations. Maekawa \cite{Maekawa} established the Prandtl's expansion for data whose vorticity is compactly supported away from the boundary, while recently Kukavica, Nguyen, Vicol and Wang \cite{KNVW} extended the result to include data that are analytic only near the boundary, building upon the vorticity formulation revived by Maekawa \cite{Maekawa}, the direct proof of the inviscid limit for analytic data developed in Nguyen and Nguyen \cite{NN18}, and the Sobolev-analytic norm developed in Kukavica, Vicol and Wang \cite{KVW20}. All these aforementioned works are on the half-plane. We mention a recent result \cite{WW20}, which to the best of our knowledge was the first to establish a Prandtl asymptotic expansion in a curved domain.

When background boundary layers have no inflection point, the analyticity can be relaxed to include perturbations in Gevrey-$\frac32$ spaces \cite{GMM1,GMM2}, which is sharp in view of the Kelvin-Helmholtz type of instability of generic boundary layers and shear flows \cite{GGN1,GGN2}. When Sobolev data is allowed, the Prandtl's asymptotic expansion is false due to counter-examples given in \cite{Grenier, GrenierNguyen,GrenierNguyen2}, where the failure of the convergence from Navier-Stokes to Euler solutions, plus a Prandtl corrector, is due to an emergence of viscous boundary sublayers that reach to order one, independent of viscosity, in $L^\infty$ norm for velocity \cite{GrenierNguyen}.

\section{Navier-Stokes equations near the boundary}

\subsection{Global geodesic coordinates}\label{sec-geodesic}

Following a construction done in \cite{BT21} we introduce a   well adapted representation   of $\del\Omega\,,$
 $$\theta \in \T= \R  \slash (\Z L)  \mapsto  x(\theta) =(x_1(\theta),x_2(\theta) ) \in \del\Omega$$
 which, being global, preserves  the analyticity hypothesis. Let $\vec \tau(\theta) $ and $\vec n(\theta) $ be the unit tangent and interior normal vectors at the boundary:
 \begin{equation}
  \begin{aligned}
  & \vec \tau (\theta)=\vec \tau(x(\theta)  =(  x'_1(\theta),   x'_2(\theta) ), \quad \hbox{ and} \quad \vec n (\theta) =\vec n (x(\theta))=  (- x'_2 (\theta),  x'_1(\theta) )\\
  & \hbox{ with} \quad  | x'(\theta)|^2= ( x'_1(\theta) )^2 +( x'_1(\theta) )^2 =1.
\end{aligned}
   \end{equation}
Let $d(x,\del \Omega)$ denotes the distance of any point $x\in \R^2 $ to $\del \Omega$ . Then  we have the following classical result.

\begin{proposition}\label{prop-deltad}
There exists a $ \delta >0$ such that for each $x$ on the open set
 \begin{equation} \label{delta}
 V_{  \delta}=\{x\in\R^2\quad \hbox{with} \quad  d(x,\del\Omega)< {\delta}\}
 \end{equation}
   there  is a unique point $\hat x(\theta)  \in \del \Omega$
   with
 $d(x,\del\Omega) =|x-\hat x(\theta) |.$ The mapping
   $x\mapsto \hat x(\theta) $ is an analytic map from  $   V_\delta $ with value in $ \del \Omega $. In addition, for $x\in V_\delta\,,$  one has the formula
   \begin{equation}
   \nabla_x d(x,\del\Omega) =\vec n (x(\theta)).
   \end{equation}
   \end{proposition}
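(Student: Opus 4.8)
The plan is to realize $V_\delta$ as the diffeomorphic image of a straight tube under the normal map. Parametrize $\del\Omega$ by arc length, so that $\vec\tau(\theta)=x'(\theta)$ and $\vec n(\theta)=(-x_2'(\theta),x_1'(\theta))$ form an analytic orthonormal frame along the boundary, and define the analytic map
\begin{equation*}
\Phi(\theta,z)=x(\theta)+z\,\vec n(\theta),\qquad (\theta,z)\in\T\times\R .
\end{equation*}
Since $\vec\tau\cdot\vec n\equiv0$ and $|\vec\tau|=1$, the planar Frenet relations give $\vec n'(\theta)=-\kappa(\theta)\vec\tau(\theta)$, where $\kappa$ is the bounded analytic curvature, so that $\partial_\theta\Phi=(1-z\kappa(\theta))\vec\tau(\theta)$ and $\partial_z\Phi=\vec n(\theta)$. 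Hence the Jacobian determinant of $\Phi$ equals $\pm(1-z\kappa(\theta))$, which is nonzero for $|z|<(\sup_\theta|\kappa(\theta)|)^{-1}$. First I would apply the analytic inverse function theorem to conclude that $\Phi$ is a local analytic diffeomorphism in a neighborhood of each $(\theta,0)$.

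The main obstacle is promoting this local invertibility to a globally injective analytic diffeomorphism on a uniform tube $\T\times(-\delta,\delta)$. This I would establish by a compactness argument: using that $\del\Omega$ is compact and that $\theta\mapsto x(\theta)$ is a one-to-one parametrization of the simple closed curve $\del\Omega$, if no uniform $\delta$ existed there would be sequences $(\theta_k,z_k)\ne(\theta_k',z_k')$ with $z_k,z_k'\to0$ and $\Phi(\theta_k,z_k)=\Phi(\theta_k',z_k')$; passing to convergent subsequences, the two limits lie on $\del\Omega$ and must coincide, contradicting the local injectivity already obtained. Shrinking $\delta$ further if necessary so that $\delta<(\sup_\theta|\kappa(\theta)|)^{-1}$, we obtain that $\Phi$ restricted to $\T\times(-\delta,\delta)$ is an analytic diffeomorphism onto its image, an open neighborhood of $\del\Omega$ containing $V_\delta$.

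Next I would identify the $z$-coordinate with the signed distance and extract the nearest-point projection. For $x=\Phi(\theta,z)$ one has $|x-x(\theta)|=|z|$, so $d(x,\del\Omega)\le|z|$; conversely, since $\del\Omega$ is compact a nearest boundary point $y=x(\theta^\ast)$ exists, and the first-order optimality condition forces $x-y$ to be orthogonal to $\vec\tau(\theta^\ast)$, i.e. $x$ lies on the normal line through $x(\theta^\ast)$, whence $x=\Phi(\theta^\ast,\pm d(x,\del\Omega))$ with $d(x,\del\Omega)<\delta$. Injectivity of $\Phi$ on the tube then forces $\theta^\ast=\theta$ and $|z|=d(x,\del\Omega)$, giving both the uniqueness of the nearest point $\hat x(\theta)=x(\theta)$ and, since $(\theta,z)=\Phi^{-1}(x)$ depends analytically on $x$ and $x(\cdot)$ is analytic, the analyticity of the map $x\mapsto\hat x(\theta)$.

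Finally, for $x\in V_\delta\cap\Omega$ the interior-normal convention gives $z=d(x,\del\Omega)>0$, so differentiating the identity $d(\Phi(\theta,z))=z$ in $z$ and in $\theta$ yields $\nabla_x d\cdot\partial_z\Phi=1$ and $\nabla_x d\cdot\partial_\theta\Phi=0$. Since $\partial_z\Phi=\vec n(\theta)$ and $\partial_\theta\Phi$ is parallel to $\vec\tau(\theta)$, and $\{\vec\tau(\theta),\vec n(\theta)\}$ is orthonormal, these two relations force $\nabla_x d(x,\del\Omega)=\vec n(\theta)=\vec n(x(\theta))$, which is the claimed formula.
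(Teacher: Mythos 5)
Your proof is correct. Note that the paper itself gives no proof of this proposition: it is invoked as a classical result (the tubular neighborhood theorem for a compact analytic simple closed curve), and the text that follows simply uses the very map you construct, $X(z,\theta)=x(\theta)+z\vec n(\theta)$, asserting without argument that it is a global diffeomorphism with Jacobian $J=1+z\gamma>0$ for $|z|<\delta$. Your argument --- analytic inverse function theorem at each $(\theta,0)$, a compactness/contradiction argument to upgrade local injectivity to global injectivity on a uniform tube, identification of $|z|$ with $d(x,\del\Omega)$ via the first-order optimality condition at a nearest boundary point, and differentiation of the identity $d(\Phi(\theta,z))=z$ to obtain the gradient formula --- is precisely the standard proof of that classical fact, so it supplies exactly what the paper leaves implicit, and it also yields the analyticity (not merely $C^2$ regularity) of the tubular coordinates that the paper's later analyticity arguments require. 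Two cosmetic remarks. First, your curvature convention $\vec n'=-\kappa\vec\tau$ carries the opposite sign to the paper's $\vec n'=\gamma\vec\tau$, so your $1-z\kappa$ is the paper's $J=1+z\gamma$; this is immaterial. Second, you correctly restrict the gradient identity to $V_\delta\cap\Omega$: as literally stated in the paper for all of $V_\delta$, the formula $\nabla_x d=\vec n$ fails at exterior points (there $\nabla_x d=-\vec n$, and $d$ is not differentiable on $\del\Omega$ itself), so your version is, if anything, the more precise one and is the one actually used, since the scaled vorticity analysis takes place on the interior side of the boundary.
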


When no confusion is possible, for $x\in V_\delta$ the notations $\vec n(x)$ and $\vec\tau (x) $  will be used for $\vec n(x(\theta)) $  and $\vec \tau (x(\theta)) $ respectively.
Observe that 						
\begin{equation}
 \vec\tau'(\theta) \wedge \vec n(\theta)=  x'_1 (\theta) x_1''(\theta)  + x'_2(\theta)  x_2''(\theta) =\frac{d}{ d \theta}| x'(\theta)|^2=0\,,
\end{equation}
which implies the relation
\begin{equation}
\vec n'(\theta) = \gamma(\theta) \vec \tau (\theta) \quad\hbox{and} \quad \vec \tau'(\theta)= \gamma(\theta) \vec n(\theta)\,,
\end{equation}
with
\begin{equation}
\gamma(\theta)= x_1''(\theta) x_2'(\theta) -x'_1(\theta) x_2''(\theta),
\end{equation}
being the curvature of the boundary $\del \Omega \,.$ Therefore  the mapping:
\begin{equation}
(z,\theta)\mapsto  X(z,\theta) =  x(\theta)  + z\vec n(  x(\theta)),
\end{equation}
defines au global $C^2$ diffeomorphisme of $[-\delta ,\delta ]\times (\R/(L\Z)) $ on $\overline{V_\delta}\,.$ Moreover,   for any vector field $x\in \overline \Omega \mapsto v(x)\,,$ as soon as $x\in\overline V_\delta\,,$ using the above notations, one has:
\begin{equation}
v(x)= (v(x)\cdot\vec \tau(x)) \vec \tau(x) +(v(x)\cdot\vec n(x)) \vec n  (x) \,.
\end{equation}
Below, for sake of clarity,   the symbol $X$ is  used for any $x=X(z,\theta)$. There hold
\begin{equation} \label{isomorph}
\begin{aligned}
&\del_z X(z,\theta) =\vec n(\theta)
\,,\quad \del_\theta X(z,\theta) = J(z,\theta) \vec \tau (\theta)\,,\\
&\hbox{and}\quad  J(z,\theta)= 1 + z\gamma(\theta) >0  \quad \hbox{for}\quad |z|<\delta\,,
\end{aligned}
\end{equation}
provided $\delta >0$ is chosen to be small enough.
From the relation
\begin{equation}
                     \begin{pmatrix}  \del_z X_1  & \del_\theta  X_1  \\
                                       \del_z X_2  & \del_\theta  X_2    \end{pmatrix}
                     \begin{pmatrix}  \del_{X_1} z  &   \del_{X_2} z  \\
                                      \del_{X_1} \theta   &   \del_{X_2} \theta \end{pmatrix}
                                      =
                     \begin{pmatrix}  1 &  0  \\
                                      0 &  1 \end{pmatrix}\,,
 \end{equation}
one deduces the formula:
\begin{equation} \label{gradients}
\nabla_X\theta  =\frac {\vec \tau ( z,\theta))}{  J(z,\theta) } \quad \hbox{and}\quad
 \nabla_X z  = \vec n(   \theta ) \,.
\end{equation}
We collect the following useful relations whose derivations are classical. For any vector field $u$, we have
\begin{equation}\label{sdiv}
\begin{aligned}
 \nabla \cdot u &=\frac1J(\del_z ( J(u\cdot \vec n)) +\del_\theta(u\cdot \vec \tau))  = \del_z  (u\cdot \vec n) +\frac1J \del_\theta(u\cdot \vec \tau)) + \frac\gamma J u\cdot \vec n \,,
\\
  \nabla \wedge u &= \frac1J(\del_z ( J u\cdot \vec \tau) -\del_\theta(u\cdot \vec \tau))=
   \del_z  ( u\cdot \vec \tau) -\frac1J \del_\theta(u\cdot \vec \tau)) +\frac\gamma J ( u\cdot \vec \tau)  \,.
\end{aligned}
  \end{equation}
  For any scalar function $\Psi$, we have
  \begin{equation}\label{ssvort}
  \nabla\wedge \Psi =\frac1J  \Bigg(
  \begin{aligned}
  &  \del_z(J\Psi)\\
    & -\del_\theta \Psi
 \end{aligned}  \Bigg)=  \Bigg(
  \begin{aligned}
  &  \del_z  \Psi \\
    & -\frac1J\del_\theta \Psi
 \end{aligned}  \Bigg)+  \Bigg(
  \begin{aligned}
  &   \frac\gamma J \Psi \\
    &0
 \end{aligned}  \Bigg)\,,
  \end{equation}
 and
 \begin{equation}\label{Lap-sa}
 \begin{aligned}
 \Delta \Psi
 &= \frac 1J \del_z (J\del_z \Psi) + \frac1J \del_\theta (  \frac 1J\del_\theta \Psi)
=
 \Delta_{z,\theta} \Psi + R_\Delta\Psi\,,
 \end{aligned}
 \end{equation}
 in which we denote
 $$\Delta_{z,\theta}=(\del^2_z+\del^2_\theta), \quad R_\Delta  = m(z,\theta) \del^2_\theta +  \frac\gamma{1 + z\gamma} \del_z -\frac{z\gamma'}{(1 + z\gamma)^3} \del_\theta \quad \hbox{and} \quad m(z,\theta)= -\frac{2z\gamma +(z \gamma)^2}{(1+z\gamma)^2}.$$

\subsection{Scaled coordinates}

In view of \eqref{Lap-sa}, we observe that the Laplacian $\Delta$ is nearly the flat Laplacian $ \Delta_{z,\theta}$, in the $(z,\theta)$ coordinates, near the boundary. To make use of this fact, we introduce the following scaled variables
\begin{equation}
(\wtd z, \wtd \theta) = (\lambda z, \lambda \theta)
\end{equation}
for sufficiently small $\lambda \in (0,1)$. By construction, we compute
\begin{equation}
\label{Lap-scaled}
\Delta = \lambda^2 \Big(  \triangle_{\wtd z,\wtd \theta} + \lambda^2 \widetilde  R_\Delta \Big)\,,
\end{equation}
in which $\Delta_{\wtd z,\wtd\theta}=(\del^2_{\wtd z}+\del^2_{\wtd \theta})$ and
\begin{equation}\label{def-RDelta}
\wtd R_\Delta  = \wtd m(\wtd z,\wtd \theta) \del^2_{\wtd \theta} +  \frac{\wtd \gamma}{1+\lambda^2\wtd z\wtd \gamma} \del_{\wtd z} -\frac{\wtd z\wtd \gamma'}{(1+\lambda^2\wtd z\wtd \gamma)^3} \del_{\wtd \theta}  \quad \hbox{and} \quad \wtd m(\wtd z,\wtd\theta)= -
 \frac{2\wtd z\wtd \gamma+\lambda^2(\wtd z\wtd \gamma)^2}{(1+\lambda^2\wtd z\wtd \gamma)^2}\,,
 \end{equation}
where $\gamma=\lambda^3\wtd \gamma(\wtd \theta)$. In the analysis, $\lambda$ will be taken sufficiently small, and so $\Delta$ is indeed approximated by $ \lambda^2 \triangle_{\wtd z,\wtd \theta}$, treating $\lambda^2 \widetilde  R_\Delta$ as a perturbation.

\subsection{Vorticity equations near the boundary}\label{sec-bdryvorticity}

In this section, we derive vorticity equations in the geodesic coordinates near the boundary in the region $V_\delta$ defined as in Proposition \ref{prop-deltad}. Introduce a smooth cutoff function $\phi^b(x)$ so that
\begin{equation}\label{def-phib}
\phi^b(x) = \left \{ \begin{aligned}
1, \qquad & \mbox{if}\quad \lambda d(x,\partial\Omega)\le \delta_0+\rho_0
\\
0, \qquad & \mbox{if} \quad \lambda d(x,\pt\Omega)\ge \delta_0+2\rho_0
\end{aligned} \right.
\end{equation}
for small positive constants $\delta_0, \rho_0$ so that $\delta_0 + 2\rho_0 < \lambda \delta$ to guarantee that $\hbox{supp} (\phi^b) \subset V_\delta$ as in Proposition \ref{prop-deltad}. Define
\begin{equation}\label{def-wb}
\w^b=\phi^b(x)\w(t,x) .
\end{equation}
It follows from \eqref{NS-vor} that
\beq \label{eq-near-bdr}
\pt_t \w^b-\nu\triangle \w^b= N^b,
\eeq
where
$$ N^b: =-u\cdot\nabla \w^b +(u\cdot\nabla \phi^b)\w-\nu(\triangle \phi^b)\w-2\nu \nabla\phi^b\cdot\nabla\w.
$$
Observe that $N^b(u,\omega) =0$ on $\{\lambda d(x,\pt\Omega)\ge\delta_0+2\rho_0\}$ where the cutoff function $\phi^b$ vanishes. We then introduce
the following scaled vorticity
\begin{equation}\label{vorticity-scale}
\omega^b(t,x)=\widetilde\omega(\lambda^2 t,\lambda \theta,\lambda z), \qquad (\wtd t,\wtd z,\wtd \theta) = (\lambda^2 t,\lambda z,\lambda \theta), %\qquad 0<\rho<\rho_0-\gamma \wtd t
\end{equation}
for small $\lambda>0$. Using \eqref{Lap-scaled}, we rewrite the vorticity equation as
\beq \label{nearBdr}
\lw(\pt_{\wtd t}-\nu \triangle_{\wtd z,\wtd \theta}\rw)\wtd \w= - \nu \lambda^2 \wtd R_\Delta \wtd \w + \lambda^{-2}N^b .
\eeq
Equation \eqref{nearBdr} is defined on $(\wtd z,\wtd \theta) \in \RR_+ \times \TT$ (in fact, the equation vanishes for $\wtd z \ge \delta_0 + 2\rho_0$).
We shall solve \eqref{nearBdr} together with the boundary condition \eqref{BC-vor}, which now reads
\begin{equation}\label{BC-vornear}
\nu (\partial_{\wtd z} + \widetilde{DN})\wtd \omega_{\vert_{\wtd z=0}} = \lambda^{-1}[\partial_n \Delta^{-1} ( u \cdot \nabla \omega)]
_{\vert_{\partial \Omega}} .
\end{equation}
System \eqref{nearBdr}-\eqref{BC-vornear} will be our main equation for the scaled vorticity near the boundary. Away from the boundary, we construct vorticity using the original system as derived in Section \ref{sec-vorticityBC}.

\subsection{Dirichlet-Neumann operator}

Let us precise the Dirichlet-Neumann operator defined as in \eqref{def-omegas}-\eqref{def-DN}. % and needed in \eqref{BC-vornear}.

\begin{lemma}\label{lem-DN} { For $\omega \in H^{1/2}(\partial \Omega)$, let $DN \omega $ be the Dirichlet-Neumann operator defined as in \eqref{def-omegas}-\eqref{def-DN}.} In the scaled variables, there holds
\begin{equation}\label{comp-DN}
\widetilde{DN} \wtd\omega = |\partial_{\wtd \theta}| \wtd\omega + \widetilde B\wtd\omega
\end{equation}
for some linear bounded operator $\widetilde B$ from $L^2(\partial \Omega)$ to itself: namely, 
$$\| \wtd B \wtd\omega\|_{L^2(\partial \Omega)}
\le C_0  \| \wtd \omega\|_{L^2(\partial \Omega)} $$
for some positive constant $C_0$.
\end{lemma}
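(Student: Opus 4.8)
The plan is to construct the harmonic extension $\omega^*$ explicitly near the boundary, peel off the flat half-plane contribution (which produces \emph{exactly} $|\partial_{\wtd\theta}|$), and show that every remaining piece is bounded on $L^2(\partial\Omega)$. First I would reduce the computation to a collar. In the geodesic frame one has $\partial_n=\partial_z$, since $\nabla_X z=\vec n$ and $\nabla_X\theta\perp\vec n$ by \eqref{gradients}; hence $DN\omega=-\partial_z\omega^*|_{z=0}$, and after the scaling \eqref{Lap-scaled} the scaled extension $\wtd\omega^*$ solves
\begin{equation*}
\big(\Delta_{\wtd z,\wtd\theta}+\lambda^2\wtd R_\Delta\big)\wtd\omega^*=0,\qquad \wtd\omega^*|_{\wtd z=0}=\wtd\omega,
\end{equation*}
with $\widetilde{DN}\wtd\omega=-\partial_{\wtd z}\wtd\omega^*|_{\wtd z=0}$. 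The geodesic chart covers only the collar $0<\wtd z<\lambda\delta$, but harmonic functions are real-analytic in the interior, so the inner trace $\wtd\omega^*|_{\wtd z=\lambda\delta/2}$ depends on $\wtd\omega$ through a smoothing (hence $L^2$-bounded) operator. It therefore suffices to study the two-point problem on the collar, in which the inner data injects only exponentially damped modes $e^{-c|k|}$ into $-\partial_{\wtd z}\wtd\omega^*|_{\wtd z=0}$ and so is absorbed into $\widetilde B$.

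Next I would extract the principal term. Discarding $\lambda^2\wtd R_\Delta$ and the inner data, the flat problem $\Delta_{\wtd z,\wtd\theta}\omega^*_0=0$, $\omega^*_0|_{\wtd z=0}=\wtd\omega$, with decay, is solved mode by mode, $\widehat{\omega^*_0}(k,\wtd z)=\widehat{\wtd\omega}(k)\,e^{-|k|\wtd z}$, so that $-\partial_{\wtd z}\omega^*_0|_{\wtd z=0}=\sum_k|k|\widehat{\wtd\omega}(k)e^{ik\wtd\theta}=|\partial_{\wtd\theta}|\wtd\omega$, exactly the claimed leading operator. Setting $\widetilde B\wtd\omega:=-\partial_{\wtd z}(\wtd\omega^*-\omega^*_0)|_{\wtd z=0}$, the correction $w:=\wtd\omega^*-\omega^*_0$ solves $\Delta_{\wtd z,\wtd\theta}w=f$ with $f=-\lambda^2\wtd R_\Delta\wtd\omega^*$ and $w|_{\wtd z=0}=0$. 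Using the explicit one-dimensional Green's function for $\partial_{\wtd z}^2-k^2$ on $(0,\infty)$ with Dirichlet data at $0$ and decay, one obtains the compact formula
\begin{equation*}
-\partial_{\wtd z}\widehat{w}(k,0)=\int_0^\infty e^{-|k|s}\,\widehat f(k,s)\,ds ,
\end{equation*}
where $\widehat{\cdot}(k,\cdot)$ denotes the $k$-th Fourier coefficient in $\wtd\theta$.

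The hard part, and the only nonroutine point, will be showing that this remainder is genuinely order zero rather than order one. The dangerous term in $\wtd R_\Delta$ is $\wtd m\,\partial_{\wtd\theta}^2$, which is second order in $\wtd\theta$; the rescue is that $\wtd m(\wtd z,\wtd\theta)=O(\wtd z)$ vanishes at the boundary while the extension decays like $\widehat{\wtd\omega^*}(k,s)\approx e^{-|k|s}$, so the governing integral obeys
\begin{equation*}
\int_0^\infty e^{-|k|s}\,\lambda^2 k^2\,s\,e^{-|k|s}\,ds\sim \frac{\lambda^2 k^2}{4k^2}=\frac{\lambda^2}{4},
\end{equation*}
uniformly in $k$; the first-order terms $\tfrac{\wtd\gamma}{1+\lambda^2\wtd z\wtd\gamma}\partial_{\wtd z}$ and $\wtd z\wtd\gamma'\partial_{\wtd\theta}$ are handled identically and produce even lower powers of $|k|$. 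Because the curvature $\wtd\gamma$ is analytic, multiplication by the coefficients $\wtd m,\wtd\gamma,\dots$ has rapidly decaying Fourier coefficients and is bounded on $L^2(\partial\Omega)$ by Young's inequality, so the mode coupling costs nothing.

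The remaining task is to upgrade the heuristic $\widehat{\wtd\omega^*}(k,s)\approx e^{-|k|s}$ to a rigorous bound. I would close the estimate by a contraction (Neumann-series) argument for $\wtd\omega^*$ in a norm measuring this exponential decay, using the smallness of $\lambda$ so that $\lambda^2\wtd R_\Delta$ is a genuine perturbation of the flat operator. Combining the perturbative collar estimate with the smoothing interior contribution then yields $\|\widetilde B\wtd\omega\|_{L^2(\partial\Omega)}\le C_0\|\wtd\omega\|_{L^2(\partial\Omega)}$, which is the assertion of the lemma.
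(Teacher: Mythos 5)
Your proposal is correct and follows essentially the same route as the paper: peel off the flat half-plane extension to produce $|\partial_{\wtd \theta}|$ exactly, represent the remainder via the Dirichlet Green's function of $\partial_{\wtd z}^2-\alpha^2$ on the half-line, exploit that $\wtd m = O(\wtd z)$ vanishes at the boundary together with the $e^{-|\alpha|\wtd z}$ decay of the extension to reduce the dangerous second-order term to order zero, and close by treating $\lambda^2 \wtd R_\Delta$ perturbatively for small $\lambda$. The only differences are implementation details: the paper localizes with the cutoff $\phi^b$ (controlling the resulting commutator terms by interior elliptic estimates) rather than your collar two-point problem with an interior trace, and it closes the self-consistent estimate in weighted $\ell^2_\alpha L^2_{\wtd z}$ norms with absorption rather than your pointwise exponential-decay contraction, which would additionally require noting that the exact rate $e^{-|\alpha|\wtd z}$ survives mode coupling with the analytic coefficients only because the collar is thin.
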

\begin{proof} { Let $\phi^b$ be the cutoff function defined as in \eqref{def-phib}, and set $\omega^{*b} = \phi^b \omega^*$, where $\phi^*$ solves \eqref{def-omegas}. It follows that   
\begin{equation}\label{def-omegas-b}\left\{
\begin{aligned}\Delta \omega^{*b} &= (\triangle \phi^b)\w^*-2\nabla\phi^b\cdot\nabla\w^*, \qquad \mbox{in}\quad \Omega
\\
{\omega^{*b}} &=\omega, \qquad \mbox{on }\quad \partial \Omega.
\end{aligned}
\right.
\end{equation}
}
Since $\phi^p$ vanishes away from the boundary, we can work in the scaled variables, which reads $\widetilde{DN} \wtd \w=-\pt_{\wtd z}\wtd \w^*_{\vert_{\wtd z=0}}$. Recalling \eqref{Lap-scaled}, the scaled function $\wtd \w^*(\wtd t, \wtd z, \wtd \theta)$ of $\omega^{*b}$ solves
\[
\triangle_{\wtd z,\wtd\theta}\wtd \w^*=-\lambda^2 \wtd R_{\Delta}\wtd \w^* + \lambda^{-2}[ (\triangle \phi^b)\w^*-2\nabla\phi^b\cdot\nabla\w^*],\qquad \wtd \w^*|_{\wtd z=0}=\wtd \w|_{\wtd z=0}\,,
\]
on $ \RR_+ \times \TT$, which can be solved explicitly. Indeed, let $\wtd\omega_\alpha$ be the Fourier coefficient of $\wtd \omega(\wtd z,\wtd\theta)$ in variable $\wtd\theta$. { Note that $\wtd \omega_\alpha$ vanishes for $\alpha =0$, and thus we focus on the case when $\alpha\not =0$.} Let  $K_\alpha(\wtd y, \wtd z)  = \frac{1}{2|\alpha|} (e^{-|\alpha(\wtd y - \wtd z)|} - e^{-|\alpha(\wtd y + \wtd z)|}) $ be the Green function of the Laplacian $\partial_{\wtd z}^2 - \alpha^2$ with the Dirichlet boundary condition. It follows that
\begin{equation}\label{def-wstar}
\begin{aligned}
 \wtd \omega^*_\alpha (\wtd z) &= e^{- |\alpha| \wtd z } \wtd \omega_\alpha (0)+ \lambda^2 \int_0^\infty K_\alpha(\wtd y, \wtd z)  (\wtd R_{\Delta}\wtd \w^*)_\alpha (\wtd y) \; d\wtd y
 \\ & \qquad {+ \lambda^{-2}  \int_0^\infty K_\alpha(\wtd y, \wtd z) \Big[(\triangle \phi^b)\w^*-2\nabla\phi^b\cdot\nabla\w^*\Big]_\alpha (\wtd y) \; d\wtd y}
\end{aligned}\end{equation}
for $\wtd z\ge 0$.
The Dirichlet-Neumann operator is thus computed by
$$
\begin{aligned}
(\widetilde{DN} \wtd \w)_\alpha &=-\pt_{\wtd z}\wtd \w^*_\alpha(0)
\\& = |\alpha| \wtd \omega_\alpha (0)+ \int_0^\infty e^{-|\alpha| \wtd y} \Big[  \lambda^2 (\wtd R_{\Delta}\wtd \w^*)_\alpha +  \lambda^{-2}( (\triangle \phi^b)\w^*-2\nabla\phi^b\cdot\nabla\w^*)_\alpha\Big](\wtd y)  \; d\wtd y.
\end{aligned}$$
The decomposition \eqref{comp-DN} thus follows, upon defining $\widetilde B$ as the integral term
\begin{equation}\label{def-BDN} (\wtd B\wtd \omega)_\alpha : =   \int_0^\infty e^{-|\alpha| \wtd y} \Big[  \lambda^2 (\wtd R_{\Delta}\wtd \w^*)_\alpha +  \lambda^{-2}( (\triangle \phi^b)\w^*-2\nabla\phi^b\cdot\nabla\w^*)_\alpha\Big](\wtd y)  \; d\wtd y\,, \end{equation}
for each Fourier variable $\alpha \in \ZZ$.
It remains to prove the boundedness of $\widetilde B$. Note that by definition, the last two terms are defined on the region $\wtd y \ge \delta_0 + \rho_0$ where the cutoff function $\phi^b=1$. Therefore, 
$$\Big|\int_0^\infty e^{-|\alpha| \wtd y} ( (\triangle \phi^b)\w^*-2\nabla\phi^b\cdot\nabla\w^*)_\alpha(\wtd y)  \; d\wtd y\Big| \lesssim  \| \omega^\star\|_{H^1(\lambda d(x,\partial \Omega) \ge \delta_0 + \rho_0)} .$$
It remains to bound the first integral term in \eqref{def-BDN}. In view of \eqref{def-RDelta}, we write
$$
\begin{aligned}
\wtd R_\Delta \wtd \omega^* &=  \del^2_{\wtd \theta} [ \wtd m \wtd \omega^*] - \del_{\wtd \theta} \Big[ 2\del_{\wtd \theta}  \wtd m \wtd \omega^* + \frac{\wtd z\wtd \gamma'}{(1+\lambda^2\wtd z\wtd \gamma)^3} \wtd \omega^*\Big ] +  \del_{\wtd z}\Big( \frac{\wtd \gamma}{1+\lambda^2\wtd z\wtd \gamma} \wtd \omega^*\Big)
\\&\quad + \Big[  (\del^2_{\wtd \theta}  \wtd m) -   \del_{\wtd z}\Big( \frac{\wtd \gamma}{1+\lambda^2\wtd z\wtd \gamma}\Big) +  \del_{\wtd \theta}\Big(\frac{\wtd z\wtd \gamma'}{(1+\lambda^2\wtd z\wtd \gamma)^3} \Big) \Big] \wtd \omega^* ,
\end{aligned}$$
noting the coefficients are analytic near the boundary. We note in particular that there is no growth in large $\wtd z$: for instance, $m(\wtd z,\wtd\theta) \lesssim \lambda^{-2}$ uniformly in large $\wtd z$. In addition, we note that $\wtd m = \wtd z \wtd m_1$ for some bounded function $\wtd m_1$. Thus, using the fact that $|\alpha| \wtd y e^{-\frac12 |\alpha| \wtd y} \lesssim 1$, the second-order derivative term $  \del^2_{\wtd \theta} [ \wtd m \wtd \omega^*] $ thus can be treated as the first order derivative term. Precisely, we can treat the first integral in \eqref{def-BDN} systematically as follows: for some smooth and bounded coefficients $b(\wtd z, \wtd \theta)$, 
$$   \lambda^2 \int_0^\infty e^{-\frac12 |\alpha| \wtd y} |(\alpha, \partial_{\wtd y}) (b \wtd \omega^*)_\alpha| (\wtd y) \; d\wtd y \lesssim    \lambda^2 |\alpha|^{-1/2}\| (\alpha, \partial_{\wtd y}) (b \wtd \omega^*)_\alpha \|_{L^2_{\wtd y}} .$$
This yields
$$
\begin{aligned}
|(\wtd B \wtd \omega)_\alpha|
&\lesssim    \lambda^2 |\alpha|^{-1/2}\| (\alpha, \partial_{\wtd y}) (b \wtd \omega^*)_\alpha \|_{L^2_{\wtd y}} +  \| \omega^\star\|_{H^1(\lambda d(x,\partial \Omega) \ge \delta_0 + \rho_0)} \,.
\end{aligned}
$$
Taking $L^2_\alpha$, we thus obtain
\begin{equation}\label{bd-Bw1}
\begin{aligned}
\sum_\alpha|(\wtd B \wtd \omega)_\alpha|^2
&\lesssim    \lambda^2 \sum_\alpha |\alpha|^{-1}\| (\alpha, \partial_{\wtd y}) \wtd \omega^*_\alpha \|_{L^2_{\wtd y}}^2,
\end{aligned}\end{equation}
upon noting that the coefficients $b(\wtd z, \wtd \theta)$, which in particular have $\| b_\alpha(\wtd z)\|_{L^1_\alpha L^\infty_{\wtd z}} <\infty$. It remains to bound the right-hand side of \eqref{bd-Bw1}.
Directly from \eqref{def-wstar}, we compute
$$
\begin{aligned}
| (\alpha, \partial_{\wtd z}) \wtd \omega^*_\alpha(\wtd z)|
&\lesssim |\alpha|e^{- |\alpha| \wtd z } |\wtd \omega_\alpha (0)|+ \lambda^2 \int_0^\infty e^{-|\alpha(\wtd z - \wtd z')| } | (\wtd R_{\Delta}\wtd \w^*)_\alpha (\wtd z')| \; d\wtd z'  + |\alpha|^{-1/2}  \| \omega^\star\|_{H^1(\lambda d(x,\partial \Omega) \ge \delta_0 + \rho_0)}.
\end{aligned}
$$
Therefore, together with the standard Hausdorff-Young's inequality, we bound
$$
\begin{aligned}
 \| (\alpha, \partial_{\wtd z}) \wtd \omega^*_\alpha\|_{L^2_{\wtd z}}
&\lesssim |\alpha|^{1/2}|\wtd \omega_\alpha (0)|+ \lambda^2 |\alpha|^{-1}  \| (\wtd R_{\Delta}\wtd \w^*)_\alpha\|_{L^2_{\wtd z}} + |\alpha|^{-1/2}  \| \omega^\star\|_{H^1(\lambda d(x,\partial \Omega) \ge \delta_0 + \rho_0)}
%\\&\lesssim |\alpha|^{1/2}|\wtd \omega_\alpha (0)|+ \lambda^2  \| (\alpha, \partial_{\wtd z'}) \wtd \omega^\star_\alpha\|_{L^2_{\wtd z}}
\end{aligned}
$$
which yields
$$
\begin{aligned}
\sum_\alpha |\alpha|^{-1}\| (\alpha, \partial_{\wtd z}) \wtd \omega^*_\alpha\|_{L^2_{\wtd z}}^2
&\lesssim \sum_\alpha|\wtd \omega_\alpha (0)|^2+ \lambda^2 \sum_\alpha |\alpha|^{-3}  \| (\wtd R_{\Delta}\wtd \w^*)_\alpha\|_{L^2_{\wtd z}} ^2 +  \| \omega^\star\|_{H^1(\lambda d(x,\partial \Omega) \ge \delta_0 + \rho_0)}^2
\\
&\lesssim \sum_\alpha|\wtd \omega_\alpha (0)|^2+ \lambda^2 \sum_\alpha |\alpha|^{-1} \| (\alpha, \partial_{\wtd z}) \wtd \omega^*_\alpha\|_{L^2_{\wtd z}}^2 
\\&\quad +
\sum_\alpha |\alpha|^{-1}\| (\alpha, \partial_{\wtd z}) \wtd \omega^*_\alpha\|^2_{L^2_{\{{\wtd z \ge \delta_0 + \rho_0}\}}} +  \| \omega^\star\|_{H^1(\lambda d(x,\partial \Omega) \ge \delta_0 + \rho_0)}.
\end{aligned}
$$
Taking $\lambda$ sufficiently small so that the second term on the right can be absorbed into the left. On the other hand, using the standard elliptic theory, the last term is bounded by
$$\sum_\alpha |\alpha|^{-1}\| (\alpha, \partial_{\wtd z}) \wtd \omega^\star_\alpha\|^2_{L^2_{\{{\wtd z \ge \delta_0 + \rho_0}\}}} \lesssim \| \omega^\star\|^2_{H^1(\lambda d(x,\partial \Omega) \ge \delta_0)} \lesssim \| \omega\|_{L^2(\partial\Omega)}^2.$$
Putting these back into \eqref{bd-Bw1}, we obtain the lemma.
\end{proof}

\section{Near boundary analytic spaces}\label{sec-norm}

In this section, we introduce the near boundary analytic norm used to control the vorticity that is analytic near the boundary, but however only has Sobolev regularity away from the boundary. We then derive sufficient elliptic estimates, bilinear estimates, as well as the semigroup estimates in these analytic spaces.

\subsection{Analytic norms}

Let $\delta>0$ be small and so that Proposition \ref{prop-deltad} applies for $\bar V_{\delta}=\{ d(x,\partial\Omega)\le \delta\}$. In particular, $\delta$ is small so that the statement of \ref{prop-deltad} still holds for $V_{2\delta}$. Now for any constant $\lambda \in (0,1)$, we have
\[
\lambda d(x,\partial\Omega) \le \lambda \delta
\]
for all $x\in \bar V_{\delta}$. Let $\delta_0=\lambda \delta$, which will the size of the analytic domain for our solution near the boundary. We fix $\rho_0\in(0,1/10)$, and assume that $\rho\in(0,\rho_0)$.
Then
  \begin{equation}
   \Omega_\rho=\{\wtd z\in \CC: 0 \leq \Re \wtd z \leq \delta_0, |\Im \wtd z|\le \rho \Re \wtd z\} \cup \{\wtd z \in \CC: \delta_0 \leq \Re \wtd z \leq \delta_0+\rho, |\Im \wtd z|\le \delta_0 + \rho -  \Re \wtd z  \}
  \end{equation}
denotes the complex domain for functions of the $\wtd z$ variable.
We note that the domain $\Omega_\rho$ only contains $\wtd z$ with $0\le \Re \wtd z\le \delta_0+\rho$.
For a complex valued function  $f$ defined on $\Omega_\rho$, let
\[
\|f\|_{L ^1_\rho}=\sup_{0 \le \eta <\rho}\| f \|_{L^1(\pt\Omega_\eta)} , \qquad \|f\|_{L ^\infty_\rho}=\sup_{0 \le \eta <\rho}\| f \|_{L^\infty(\pt\Omega_\eta)}
\]
where the integration is taken over the two directed paths along the boundary of the
domain $\Omega_{\eta}$.
Now for an analytic function $f(\wtd \theta,\wtd z)$ defined on $(\wtd \theta,\wtd z)\in \mathbb{T}\times \Omega_\rho$, we define
\beq\label{normY-mu}
\begin{aligned}
\|f\|_{\mathcal L^1_\rho} &=\sum_{\alpha\in \mathbb{Z}}\|e^{\eps_0(\delta_0+\rho- \Re \wtd z)|\alpha|}f_\alpha\|_{L^1_\rho} ,
\\
 \|f\|_{\mathcal L^\infty_\rho} &=\sum_{\alpha\in \mathbb{Z}}\|e^{\eps_0(\delta_0+\rho- \Re \wtd z)|\alpha|}f_\alpha\|_{L^\infty_\rho} ,
 \end{aligned}
\eeq
where $f_\alpha$ denotes the Fourier transform of $f$ with respect to variable $\wtd\theta$. The function spaces $\cL^1_\rho$ and $\cL_\rho^\infty$ are to control the scaled vorticity and velocity, respectively. We stress that the analyticity weight vanishes on $\Re \wtd z \ge \delta_0+\rho$. For convenience, we also introduce the following analytic norms
\begin{equation}\label{def-W1k}\| f\|_{\cW_\rho^{k,p}} =  \sum_{i+j\le k}\|\pt_{\wtd \theta}^i (\wtd z\pt_{\wtd z})^j f\|_{\cL^p_\rho}
\end{equation}
for $k\ge 0$ and $p=1, \infty$.
We observe the following simple algebra.

\begin{lemma} There hold
\begin{equation}\label{Y-algebra} \| fg \|_{\cL^1_\rho} \le \| f\|_{\cL^\infty_\rho } \| g\|_{\cL^1_\rho}
\end{equation}
and for any $0<\rho'<\rho$,
\begin{equation}\label{Y-derivative}
\| \partial_{\wtd \theta} f \|_{\cL^1_{\rho'}} + \| \wtd z \partial_{\wtd z} f\|_{\cL^1_{\rho'}} \lesssim \frac{1}{\rho - \rho'} \| f\|_{\cL^1_\rho}.
\end{equation}

\end{lemma}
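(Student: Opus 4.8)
The plan is to transfer both inequalities to the Fourier side in $\wtd\theta$, where the weight $e^{\eps_0(\delta_0+\rho-\Re\wtd z)|\alpha|}$ carries all the analyticity, and to use repeatedly that the exponent $\delta_0+\rho-\Re\wtd z$ is nonnegative on $\Omega_\rho$ (the domain contains only $\Re\wtd z\le\delta_0+\rho$). For the algebra estimate \eqref{Y-algebra}, I would write the product as the convolution $(fg)_\alpha=\sum_{\beta\in\ZZ}f_{\alpha-\beta}g_\beta$. Nonnegativity of the exponent and $|\alpha|\le|\alpha-\beta|+|\beta|$ give the pointwise splitting
$$e^{\eps_0(\delta_0+\rho-\Re\wtd z)|\alpha|}\le e^{\eps_0(\delta_0+\rho-\Re\wtd z)|\alpha-\beta|}\,e^{\eps_0(\delta_0+\rho-\Re\wtd z)|\beta|}.$$
On each contour $\partial\Omega_\eta$ I then apply $\|uv\|_{L^1(\partial\Omega_\eta)}\le\|u\|_{L^\infty(\partial\Omega_\eta)}\|v\|_{L^1(\partial\Omega_\eta)}$ to the weighted factors of $f_{\alpha-\beta}$ and $g_\beta$, take the supremum over $\eta\in[0,\rho)$, sum in $\alpha$, and rearrange the double sum by Fubini to factor it as $\|f\|_{\cL^\infty_\rho}\|g\|_{\cL^1_\rho}$.

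For the tangential derivative in \eqref{Y-derivative}, I would use $(\partial_{\wtd\theta}f)_\alpha=i\alpha f_\alpha$ together with the identity $e^{\eps_0(\delta_0+\rho'-\Re\wtd z)|\alpha|}=e^{-\eps_0(\rho-\rho')|\alpha|}e^{\eps_0(\delta_0+\rho-\Re\wtd z)|\alpha|}$, in which the first factor is independent of $\wtd z$. Since the contour family of $L^1_{\rho'}$ is contained in that of $L^1_\rho$, it then suffices to bound $\sum_\alpha|\alpha|\,e^{-\eps_0(\rho-\rho')|\alpha|}\,\|e^{\eps_0(\delta_0+\rho-\Re\wtd z)|\alpha|}f_\alpha\|_{L^1_\rho}$, and the elementary inequality $\sup_{x\ge0}x\,e^{-\eps_0(\rho-\rho')x}\lesssim(\rho-\rho')^{-1}$ produces exactly the claimed loss.

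The substantive part is $\wtd z\,\partial_{\wtd z}f$, which must genuinely exploit analyticity in $\wtd z$ on $\Omega_\rho$. Fixing $\alpha$ and a contour $\partial\Omega_\eta$ with $\eta<\rho'$, I would bound $\partial_{\wtd z}f_\alpha$ at $\wtd z_0\in\partial\Omega_\eta$ by Cauchy's formula on a small disk $D(\wtd z_0,r(\wtd z_0))\subset\Omega_\rho$, giving $|\partial_{\wtd z}f_\alpha(\wtd z_0)|\lesssim r(\wtd z_0)^{-1}\sup_{\partial D}|f_\alpha|$. The admissible radius is dictated by the geometry of the nested domains: in the conical part $\{\Re\wtd z\le\delta_0\}$ the gap between $\partial\Omega_\eta$ and $\partial\Omega_\rho$ at $\wtd z_0$ is comparable to $(\rho-\rho')\Re\wtd z_0$ and degenerates at the origin, whereas in the triangular part it is comparable to $\rho-\rho'$. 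Because $|\wtd z_0|\sim\Re\wtd z_0$ on the cone and $|\wtd z_0|$ is bounded on the triangle, the prefactor $\wtd z$ exactly cancels this degeneration, leaving $|\wtd z_0\,\partial_{\wtd z}f_\alpha(\wtd z_0)|\lesssim(\rho-\rho')^{-1}\sup_{\partial D}|f_\alpha|$ uniformly. To restore the $L^1$-along-contour structure I would integrate the Cauchy representation over $\wtd z_0\in\partial\Omega_\eta$ and use Fubini, so that the swept circles fill a neighborhood of $\partial\Omega_\eta$ of width $\sim r$ inside $\Omega_\rho$; the resulting area integral of $|f_\alpha|$ is dominated by $\sup_{\eta'<\rho}\|f_\alpha\|_{L^1(\partial\Omega_{\eta'})}$ times the width, which combines with the kernel to leave the single factor $(\rho-\rho')^{-1}$. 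For the weight, since the disks are local $\Re\zeta$ differs from $\Re\wtd z_0$ only by $O(r)$, so the mismatch $e^{\eps_0 O(r)|\alpha|}$ is absorbed by the gain $e^{-\eps_0(\rho-\rho')|\alpha|}$ coming from passing to the smaller $\rho'$-weight, using $\Re\wtd z_0<1$ throughout $\Omega_\rho$. Summing in $\alpha$ and taking the supremum over $\eta<\rho'$ yields the bound.

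I expect the $\wtd z\,\partial_{\wtd z}$ estimate to be the main obstacle, since three features must be handled at once: the pinching of $\Omega_\rho$ at $\wtd z=0$, which is resolved precisely by the weight $\wtd z$; the passage from the pointwise Cauchy bound to the $L^1$-contour norm without losing the gain $(\rho-\rho')^{-1}$, which forces the Fubini argument over the thin swept neighborhood rather than a naive shift to a far-reaching contour $\partial\Omega_{\eta'}$ (the latter fails because the exponential weight penalizes samples at larger $\Re\wtd z$ faster than the polynomial Cauchy kernel decays); and the compatibility of the analytic weight under the contour shift, where the locality of the disks and the $\rho\to\rho'$ gain are both essential. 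By contrast, the algebra and tangential-derivative bounds are routine once the nonnegativity of the exponent is used to split the weight.
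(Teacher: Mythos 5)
Your proposal is correct and follows essentially the same route as the paper: the convolution splitting of the analyticity weight for \eqref{Y-algebra} and the bounded factor $(\rho-\rho')|\alpha|e^{-(\rho-\rho')|\alpha|}$ for the $\partial_{\wtd\theta}$ bound are exactly the paper's argument. For the $\wtd z\,\partial_{\wtd z}$ bound the paper merely cites the classical references (Caflisch--Sammartino, Nguyen--Nguyen), and your Cauchy-formula argument on nested contours --- with the prefactor $\wtd z$ compensating the degeneration of the admissible disk radius where $\Omega_\rho$ pinches at $\wtd z=0$, and the weight mismatch absorbed since the disk radius is at most $\rho-\rho'$ --- is precisely that classical argument, correctly carried out.
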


\begin{proof} By definition, we compute
$$
\begin{aligned}
e^{\eps_0(\delta_0+\rho- \Re \wtd z)|\alpha|}|(fg)_\alpha (\wtd z)| &\le \sum_{\alpha'} |f_{\alpha-{\alpha'}}(\wtd z) g_{\alpha'} (\wtd z)| e^{\eps_0(\delta_0+\rho- \Re \wtd z)|\alpha|}
\\
&\le \sum_{\alpha'} | e^{\eps_0(\delta_0+\rho- \Re \wtd z)|\alpha-{\alpha'}|}f_{\alpha-{\alpha'}}(\wtd z) e^{\eps_0(\delta_0+\rho- \Re \wtd z)|{\alpha'}|}g_{\alpha'} (\wtd z)|
\end{aligned}$$
which gives
$$
\begin{aligned}
\| e^{\eps_0(\delta_0+\rho- \Re \wtd z)|\alpha|}(fg)_\alpha (\wtd z)\|_{\mathcal{L}_\rho^1}
&\le \sum_{\alpha'} \| e^{\eps_0(\delta_0+\rho- \Re \wtd z)|\alpha-{\alpha'}|}f_{\alpha-{\alpha'}}\|_{\mathcal L^\infty_\rho} \| e^{\eps_0(\delta_0+\rho- \Re \wtd z)|{\alpha'}|}g_{\alpha'} \|_{\mathcal L^1_\rho} .
\end{aligned}$$
The estimate \eqref{Y-algebra} follows from taking the summation in $\alpha$ over $\ZZ$. The stated bounds on derivatives are classical (e.g., \cite{CS98, NN18}), making use of the fact that $(\rho-\rho')|\alpha| e^{(\rho'-\rho)|\alpha|}$ is bounded.
\end{proof}

\subsection{Elliptic estimates in the half-plane}

In this section, we derive some basic elliptic estimates in the analytic spaces $\cW^{k,p}_\rho$. Precisely, we consider
\begin{equation}\label{elliptic}
\left \{
\begin{aligned}
\Delta_{z,\theta} \phi &= f, \qquad \mbox{in }\quad \RR_+ \times \TT
\\
\phi_{\vert_{z=0}} &=0
\end{aligned}\right.
\end{equation}
in which we drop titles for sake of presentation. The $\cW^{k,p}_\rho$ analytic norm is defined on $\Re z\le \delta_0 + \rho$ as introduced in the previous section.
We obtain the following proposition.

\begin{prop} \label{prop-elliptic}
Let $\phi$ be the solution of \eqref{elliptic}. Then, the velocity field $u= \nabla^\perp \phi$ satisfies
\beq \label{elliptic1}
\begin{aligned}
\|u\|_{\cW^{k,\infty}_\rho } & \lesssim \| f\|_{\cW^{k,1}_\rho} + \|f\|_{H^{k+1}(\{z\ge \delta_0+\rho\})}
\\
\|(\frac{1}{ z} \partial_{ \theta} \phi) \|_{\cW^{k,\infty}_\rho }
&\lesssim  \| f\|_{\cW^{k,1}_\rho} + \|\partial_{ \theta}f\|_{\cW^{k,1}_\rho} + \|f\|_{H^{k+1}(\{z\ge \delta_0 + \rho\})}
\\
\| \nabla_{z,\theta}u\|_{\cW^{k,\infty}_\rho } & \lesssim \| f\|_{\cW^{k,\infty}_\rho} + \|f\|_{H^{k+2}(\{z\ge \delta_0+\rho\})}
\end{aligned}
\eeq
for $k\ge 0$.
\end{prop}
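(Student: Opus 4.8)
\emph{Plan.} The problem \eqref{elliptic} is solved explicitly by Fourier transform in $\theta$ together with the one-dimensional Dirichlet Green's function $K_\alpha(y,z) = \frac{1}{2|\alpha|}\big(e^{-|\alpha||y-z|} - e^{-|\alpha|(y+z)}\big)$ of $\partial_z^2 - \alpha^2$ already used in the proof of Lemma \ref{lem-DN}. Writing $\phi_\alpha$ for the $\theta$-Fourier coefficient, the solution is $\phi_\alpha(z) = \int_0^\infty K_\alpha(y,z) f_\alpha(y)\,dy$ for $\alpha\neq 0$; the zero mode $\phi_0$ solves $\partial_z^2\phi_0 = f_0$ with the decaying normalization $\partial_z\phi_0(z) = -\int_z^\infty f_0$, and is harmless since $f$ is compactly supported in $z$. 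As $u=\nabla^\perp\phi$, its two components are, in Fourier, $\pm i\alpha\phi_\alpha$ and $\partial_z\phi_\alpha$, i.e.\ they are produced by the kernels $\alpha K_\alpha$ and $\partial_z K_\alpha$, both of which satisfy the pointwise bound $|\alpha K_\alpha(y,z)| + |\partial_z K_\alpha(y,z)| \lesssim e^{-|\alpha||y-z|}$ (using $y+z\ge|y-z|$ on $\RR_+$).

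The engine of all three estimates is a single weight-shifting inequality: fixing the analyticity rate $\eps_0\in(0,1)$, one has $e^{\eps_0(\delta_0+\rho-\Re z)|\alpha|}\,e^{-|\alpha||y-z|} \lesssim e^{\eps_0(\delta_0+\rho-\Re y)|\alpha|}$, because $\eps_0|\Re y - \Re z| \le \eps_0|y-z| \le |y-z|$. This moves the $z$-weight of the $\cL^\infty_\rho$ norm of $u$ onto the $y$-weight of the $\cL^1_\rho$ norm of $f$ under the kernel, which gives the first estimate: it is a genuine $L^1\to L^\infty$ gain since $e^{-|\alpha||y-z|}\le 1$, so $\sup_z$ followed by summation in $\alpha$ lands in $\cL^1_\rho$. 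The third estimate is instead an $L^\infty\to L^\infty$ bound that takes one more derivative, handled by using the equation itself: $\partial_z^2\phi_\alpha = f_\alpha + \alpha^2\phi_\alpha$, so $\nabla_{z,\theta}u$ reduces to $f$ together with the kernels $\alpha^2 K_\alpha$ and $\alpha\partial_z K_\alpha$, each $\lesssim |\alpha|e^{-|\alpha||y-z|}$; since $\int_0^\infty |\alpha|e^{-|\alpha||y-z|}\,dy \lesssim 1$, the weighted $L^\infty$ of these terms is controlled by the weighted $L^\infty$ of $f_\alpha$, yielding the $\cW^{k,\infty}_\rho$ norm on the right. For the second estimate I would exploit that $\partial_\theta\phi$ vanishes at $z=0$, writing $\frac1z\partial_\theta\phi = \frac1z\int_0^z \partial_{z'}\partial_\theta\phi\,dz'$, so that the singular factor is tamed and the bound is controlled by the weighted norm of $\partial_\theta\partial_z\phi = \partial_\theta(\text{velocity component})$; the extra tangential derivative is precisely what produces both $\|f\|$ and $\|\partial_\theta f\|$ on the right-hand side.

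Two pieces of bookkeeping complete the argument. First, the analytic weight is switched off for $\Re z \ge \delta_0+\rho$, so in each estimate I split the $y$-integral at $\delta_0+\rho$: the inner part reproduces the stated analytic norms of $f$, while for $y\ge\delta_0+\rho$, where $f$ carries only Sobolev regularity, the exponential kernel together with Cauchy--Schwarz in $y$ and $\ell^2$-summability in $\alpha$ (gaining the $|\alpha|^{-1/2}$-type factors against $e^{-|\alpha||y-z|}$, exactly as in the proof of Lemma \ref{lem-DN}) yields the $H^{k+1}$, respectively $H^{k+2}$, remainder. Second, the higher-order norms $\cW^{k,p}_\rho$ are reached by observing that $\partial_\theta$ commutes with $\Delta_{z,\theta}$ and with the boundary condition, reducing tangential derivatives to the $k=0$ case applied to $\partial_\theta^i f$, while the conormal weights $(z\partial_z)^j$ are differentiated directly on the kernel, each producing polynomial prefactors in $|\alpha||y-z|$ that are absorbed into $e^{-|\alpha||y-z|}$ at the cost of an arbitrarily small loss in $\eps_0$ (via $s^m e^{-s}\lesssim e^{-s/2}$).

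The main obstacle, beyond this weight algebra, is the honest treatment of the complex $z$-contours $\partial\Omega_\eta$ on which the $\cL^p_\rho$ norms are defined: the kernel $e^{-|\alpha||y-z|}$ must be read along these contours, which requires deforming the $y$-integration to use the analyticity of $f_\alpha$ in $\Omega_\rho$ and checking that the weight-shift inequality survives with $\Re z,\Re y$ in place of $z,y$. This contour analysis, together with the clean matching of the analytic and Sobolev regions across $\Re z = \delta_0+\rho$, is where the real care lies; the kernel bounds themselves are classical (cf.\ \cite{CS98, NN18}).
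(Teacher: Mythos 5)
Your plan follows essentially the same route as the paper's proof: solve \eqref{elliptic} mode-by-mode with the Dirichlet Green's function, transfer the analyticity weight from $z$ to $y$ via $\eps_0|\Re y-\Re z|\le |y-z|$ (keeping a residual factor $e^{-(1-\eps_0)|\alpha||y-z|}$ for the far region), split the $y$-integral at $\delta_0+\rho$, and close the Sobolev part by Cauchy--Schwarz plus $\ell^2_\alpha$-summation; your use of the equation $\partial_z^2\phi_\alpha=f_\alpha+\alpha^2\phi_\alpha$ for the third bound is an equivalent substitute for the paper's direct estimate of $(\alpha,\partial_z)^2\phi_\alpha$, and your explicit treatment of the zero mode is a point the paper passes over. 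The one genuine divergence is the second estimate: the paper absorbs the factor $1/z$ pointwise into the kernel, using that the Dirichlet Green's function vanishes at $z=0$ (so $|K_\pm(y,z)|\lesssim |z|\,e^{-|\alpha||y-z|}$), whereas you write $\tfrac1z\partial_\theta\phi=\tfrac1z\int_0^z\partial_{z'}\partial_\theta\phi\,dz'$ and invoke the first estimate for $\partial_\theta u$. Two caveats about your variant. First, the complex domain $\Omega_\rho$ is \emph{not} star-shaped with respect to the origin: the cone $\{|\Im z|\le\eta\,\Re z,\ \Re z\le\delta_0\}$ is much thinner at $\Re z=\delta_0$ than the adjoining triangle when $\delta_0$ is small, so the segment $[0,z]$ can exit the domain; this is repairable (in the cone the segment does stay inside, and on the triangle $|z|\ge\delta_0$ so $1/|z|\le 1/\delta_0$ and no integration is needed), but it must be said, and it costs a constant depending on $\delta_0$. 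Second, reducing to $\partial_\theta u$ makes the far-field contribution come with a tangential derivative, giving $\|\partial_\theta f\|_{H^{k+1}(\{z\ge\delta_0+\rho\})}\lesssim\|f\|_{H^{k+2}}$ rather than the stated $\|f\|_{H^{k+1}}$; the paper's kernel-vanishing bound, refined to $|K_\pm|\lesssim\min(|z|,|\alpha|^{-1})e^{-|\alpha||y-z|}$, avoids this loss. Neither caveat matters for the paper's applications (the interior norm actually propagated is $H^4$), but as written your second inequality is slightly weaker than the one claimed.
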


\begin{proof}
The elliptic problem \eqref{elliptic} can be solved explicitly in Fourier space. Indeed, taking the Fourier transform in $\theta$, we get the elliptic equation
$$(\partial_z^2 -\alpha^2) \phi_\alpha = f_\alpha$$ for the Fourier transform $\phi_\alpha$.
We focus on the case $\alpha>0$; the other case is similar. The solution is given by
$$ \phi_\alpha( z)=\int_{0}^{ z} K_-(y, z) f_\alpha(y) dy + \int_{ z}^\infty K_+(y, z) f_\alpha(y)dy$$
with the Green function defined by  $$K_\pm(y,z) = -\frac{1}{2\alpha} \Big( e^{\pm\alpha (z-y) } - e^{-\alpha (y+z)} \Big).
$$
This expression may be extended to complex values of $z$. Indeed, for $z\in \Omega_\sigma$, there is a positive $\theta$ so that $z\in \partial\Omega_\theta$. We then write  $\partial\Omega_\theta = \gamma_-(z) \cup \gamma_+(z) $, consisting of complex numbers $y \in \partial\Omega_\theta$ so that $\Re y < \Re z$ and $\Re y > \Re z$, respectively. Then, the integral is taken over $\gamma_-( z)$ and $\gamma_+( z)$, respectively. We note in particular that for $y \in \gamma_\pm(z)$, there hold the same bounds on the Green function
$$ |K_\pm(y,z)|\le \alpha^{-1} e^{-\alpha |y-z|} .$$
This proves that
\begin{equation}\label{bd-phiz0}
\begin{aligned}
 | \phi_\alpha( z)| &\le \int_{\partial \Omega_\theta} \alpha^{-1} e^{-\alpha |y- z|}|f_\alpha(y)| |dy|.
\end{aligned}\end{equation}
By definition of $\cL^1_\rho$ norm, we only need to consider the case when $0\le \Re  z \le \delta_0 + \rho$. Now, for $0\le \Re y \le  \delta_0 + \rho$, we bound
$$  e^{-\alpha |\Re y- \Re  z|} e^{-\eps_0(\delta_0+\rho- \Re y)\alpha} \le e^{-\eps_0(\delta_0+\rho- \Re  z)\alpha} e^{-(1-\epsilon_0)\alpha |\Re y -\Re  z|}$$
noting $\epsilon_0 \le 1/2$. On the other hand, for $\Re y \ge \delta_0 + \rho$ (recalling $\delta_0 + \rho \ge \Re  z$), we bound
$$  e^{-\alpha |\Re y- \Re  z|} \le e^{- \epsilon_0(\delta_0+\rho- \Re  z) \alpha }  e^{- (1-\epsilon_0)\alpha |\Re y- \Re  z|}.$$
Therefore, we bound
$$
\begin{aligned}
 \int_{\Re y \le \delta_0 + \rho} \alpha^{-1} e^{-\alpha |y- z|}  | f_\alpha(y)|  |dy| &\lesssim \alpha^{-1} e^{-\eps_0(\delta_0+\rho- \Re  z)\alpha} \| e^{\eps_0(\delta_0+\rho- \Re y)\alpha}f_\alpha\|_{L^1_\rho},
 \\
 \int_{\Re y \ge \delta_0 + \rho} \alpha^{-1} e^{-\alpha |y- z|}  | f_\alpha(y)|  |dy| &\lesssim \alpha^{-3/2} e^{-\eps_0(\delta_0+\rho- \Re  z)\alpha} \| f_\alpha\|_{L^2(y \ge \delta_0 + \rho)}.
 \end{aligned}$$
Similarly, we also have
$$
\begin{aligned}
 \int_{\Re y \le \delta_0 + \rho} \alpha^{-1} e^{-\alpha |y- z|}  | f_\alpha(y)|  |dy| &\lesssim \alpha^{-2} e^{-\eps_0(\delta_0+\rho- \Re  z)\alpha} \| e^{\eps_0(\delta_0+\rho- \Re y)\alpha}f_\alpha\|_{L^\infty_\rho},
 \end{aligned}$$
which gains an extra factor of $\alpha$. This proves
$$
\begin{aligned}
 \| e^{\eps_0(\delta_0+\rho- \Re  z)\alpha}(\alpha,\partial_z) \phi_\alpha\|_{L^\infty_\rho} &\le \| e^{\eps_0(\delta_0+\rho- \Re y)\alpha}f_\alpha\|_{L^1_\rho} +  \alpha^{-1/2}\| f_\alpha\|_{L^2(y \ge \delta_0 + \rho)}
 \\
  \| e^{\eps_0(\delta_0+\rho- \Re  z)\alpha} (\alpha,\partial_z)^2 \phi_\alpha\|_{L^\infty_\rho} &\le \| e^{\eps_0(\delta_0+\rho- \Re y)\alpha}f_\alpha\|_{L^\infty_\rho} +  \alpha^{1/2}\| f_\alpha\|_{L^2(y \ge \delta_0 + \rho)}
. \end{aligned}$$
Taking the summation in $\alpha \in \ZZ$ yields the first and last estimates in \eqref{elliptic1} for $k=0$. For $k\ge 0$, the estimates follow similarly. For the estimates involving the weight $z^{-1}$, we use the fact that the Green function vanishes on the boundary $z=0$, and so $ |G_\pm(y,z)|\le z e^{-\alpha |y-z|} .$
\end{proof}

\subsection{Biot-Savart law in $\Omega$}\label{sec-elliptic}

In this section, we bound the velocity through the Biot-Savart law: namely, $u = \nabla^\perp \phi$, where
\begin{equation}\label{BSlaw}
\left \{
\begin{aligned}
\Delta \phi &= \omega , \qquad \mbox{in }\quad \Omega
\\
\phi &=0, \qquad \mbox{on}\quad \partial \Omega .
\end{aligned}\right.
\end{equation}
{Without loss of generality, we will work with the cut-off vorticity $\omega^b$ (see Section \ref{sec-nonlinear}) near the boundary where the rescaled coordinates introduced in Section \ref{sec-bdryvorticity} apply.} 
We obtain the following proposition.

\begin{prop} \label{inverseLaplace}
Let $\phi$ be the solution of \eqref{BSlaw}. Then, the velocity field $u= \nabla^\perp \phi$ satisfies
\beq \label{laplace-5}
\begin{aligned}
\|u\|_{\cW^{k,\infty}_\rho } & \lesssim \|\omega\|_{\cW^{k,1}_\rho} + \|\omega\|_{H^{k+1}(\{\lambda d(x,\partial\Omega)\ge \delta_0/2\})}
\\
\|(\frac{1}{\wtd z} \partial_{\wtd \theta} \phi) \|_{\cW^{k,\infty}_\rho }
&\lesssim  \|\omega\|_{\cW^{k,1}_\rho} + \|\partial_{\wtd \theta}\omega\|_{\cW^{k,1}_\rho} + \|\omega\|_{H^{k+1}(\{\lambda d(x,\partial\Omega)\ge \delta_0/2\})}
\end{aligned}
\eeq
for $k\ge 0$.
\end{prop}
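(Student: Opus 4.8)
The plan is to reduce the Biot-Savart problem \eqref{BSlaw} on the curved domain $\Omega$ to the flat half-plane problem \eqref{elliptic} already handled in Proposition \ref{prop-elliptic}, and then account for the discrepancy between the true Laplacian $\Delta$ and the flat Laplacian $\Delta_{z,\theta}$ as a controllable perturbation. Concretely, I would pass to the scaled geodesic coordinates $(\wtd z, \wtd \theta)$ of Section \ref{sec-geodesic}, where by \eqref{Lap-scaled} the operator $\Delta$ becomes $\lambda^2(\triangle_{\wtd z,\wtd\theta}+\lambda^2\wtd R_\Delta)$ near the boundary. Writing the scaled stream function $\wtd\phi$, the equation $\Delta\phi=\omega$ becomes
\begin{equation}\label{scaled-BS}
\triangle_{\wtd z,\wtd\theta}\wtd\phi = \lambda^{-2}\wtd\omega - \lambda^2 \wtd R_\Delta\wtd\phi, \qquad \wtd\phi_{\vert_{\wtd z=0}}=0,
\end{equation}
in $\RR_+\times\TT$, exactly the setting of \eqref{elliptic} with source $f = \lambda^{-2}\wtd\omega - \lambda^2\wtd R_\Delta\wtd\phi$. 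The near-boundary cutoff (working with $\omega^b$ as noted before the statement) ensures the scaled problem genuinely lives on the half-plane and that the coefficients of $\wtd R_\Delta$ are analytic and bounded there.

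Next I would set up a fixed-point or absorption argument on the $\cW^{k,\infty}_\rho$ norm of $\nabla_{z,\theta}\wtd\phi$. Applying Proposition \ref{prop-elliptic} to \eqref{scaled-BS} gives
\begin{equation}\label{absorb}
\|u\|_{\cW^{k,\infty}_\rho} \lesssim \|f\|_{\cW^{k,1}_\rho} + \|f\|_{H^{k+1}(\{\wtd z\ge\delta_0+\rho\})},
\end{equation}
and the point is that the perturbative part $\lambda^2\wtd R_\Delta\wtd\phi$ of $f$ contributes, via the explicit structure of $\wtd R_\Delta$ in \eqref{def-RDelta}, a term bounded by $C\lambda^2\|\nabla_{z,\theta}u\|_{\cW^{k,\infty}_\rho}$ plus lower-order derivatives of $u$. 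The analytic coefficients $\wtd m, \wtd\gamma$ and their derivatives are smooth and bounded with finite $L^1_\alpha L^\infty_{\wtd z}$ Fourier norms, so the algebra estimate \eqref{Y-algebra} applies. Choosing $\lambda$ sufficiently small lets me absorb this perturbative contribution into the left-hand side, leaving $\|u\|_{\cW^{k,\infty}_\rho}\lesssim\lambda^{-2}\|\wtd\omega\|_{\cW^{k,1}_\rho}+\cdots$, which after undoing the scaling yields the stated bound in terms of $\|\omega\|_{\cW^{k,1}_\rho}$ and the Sobolev tail $\|\omega\|_{H^{k+1}(\{\lambda d(x,\partial\Omega)\ge\delta_0/2\})}$. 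The second estimate, involving the weight $\wtd z^{-1}\partial_{\wtd\theta}\phi$, follows the same scheme but invokes the corresponding line of Proposition \ref{prop-elliptic}, which exploits the vanishing of the Green function at $\wtd z=0$; this is where the extra $\|\partial_{\wtd\theta}\omega\|_{\cW^{k,1}_\rho}$ term on the right enters.

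The main obstacle I anticipate is twofold. First, the absorption argument requires that every term produced by $\wtd R_\Delta\wtd\phi$ carry a genuine factor of $\lambda^2$ (or at worst a constant independent of $\lambda$ multiplying a norm weaker than the left-hand side), and one must verify this for the top-order piece $\wtd m\,\partial^2_{\wtd\theta}\wtd\phi$, which at first glance is the same order as $\triangle_{\wtd z,\wtd\theta}\wtd\phi$. The saving grace, as in Lemma \ref{lem-DN}, is that $\wtd m = \wtd z\,\wtd m_1$ vanishes at the boundary, so one gains a factor of $\wtd z$ that can be traded against a derivative through the $\wtd z\partial_{\wtd z}$ structure of the $\cW^{k,p}_\rho$ norm in \eqref{def-W1k}; handling this carefully is the delicate point. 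Second, one must ensure the matching between the analytic region and the Sobolev region is clean: the cutoff in $\phi^b$ produces commutator terms supported in $\{\wtd z\ge\delta_0+\rho\}$, which must be estimated by standard elliptic theory in $H^{k+1}$ away from the boundary and shown to be dominated by the stated right-hand side. Apart from these two points, the remaining steps are routine applications of \eqref{Y-algebra}, \eqref{Y-derivative}, and Proposition \ref{prop-elliptic}.
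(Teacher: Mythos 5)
Your overall skeleton coincides with the paper's: pass to the scaled geodesic coordinates, regard \eqref{BSlaw} as the flat problem \eqref{elliptic} with source $f=\lambda^{-2}\wtd\omega-\lambda^{2}\wtd R_{\Delta}\wtd\phi$, apply Proposition \ref{prop-elliptic}, absorb the perturbation for $\lambda$ small, and handle the cutoff/interior contributions by standard Sobolev elliptic theory (this is exactly \eqref{BSbd-u1} and \eqref{Sobolev-Rdelta} in the paper). However, your treatment of the point you yourself single out as delicate --- the top-order piece $\wtd m\,\del^{2}_{\wtd\theta}\wtd\phi$ --- has a genuine gap. Your intermediate claim that the perturbation contributes $C\lambda^{2}\|\nabla_{z,\theta}u\|_{\cW^{k,\infty}_\rho}$ cannot be closed by absorption: the left-hand side of your elliptic estimate is $\|u\|_{\cW^{k,\infty}_\rho}$, not $\|\nabla_{z,\theta}u\|_{\cW^{k,\infty}_\rho}$, and a gradient norm cannot be hidden in a function norm. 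If you instead try to close the loop at the gradient level via the third estimate of Proposition \ref{prop-elliptic}, the source must be measured in $\cW^{k,\infty}_\rho$, which puts $\lambda^{-2}\|\wtd\omega\|_{\cW^{k,\infty}_\rho}$ on the right-hand side --- a norm that appears nowhere in \eqref{laplace-5} and is not controlled by the nonlinear iteration, which only propagates $\|\omega\|_{\cW^{k,1}_\rho}$.

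Your proposed rescue --- $\wtd m=\wtd z\,\wtd m_{1}$ together with ``trading $\wtd z$ against a derivative through the $\wtd z\del_{\wtd z}$ structure of the norm'' --- does not repair this, because the offending second derivative is \emph{tangential}: the norms \eqref{def-W1k} pair the weight $\wtd z$ only with $\del_{\wtd z}$, so $\wtd z\,\del^{2}_{\wtd\theta}\wtd\phi$ is not controlled by $\|u\|_{\cW^{k,\infty}_\rho}$ however the factor $\wtd z$ is distributed. In Lemma \ref{lem-DN} the factor $\wtd y$ is traded against the kernel via $|\alpha|\wtd y\,e^{-|\alpha|\wtd y}\lesssim 1$, which is available there because the Dirichlet--Neumann kernel is anchored at the boundary $\wtd z=0$; for the interior Green function $e^{-|\alpha||y-z|}$ of \eqref{elliptic} the factor $\wtd y$ yields no such uniform gain. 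The mechanism the paper actually uses is purely kernel-based and does not need the vanishing of $\wtd m$: in the proof of Proposition \ref{prop-elliptic}, measuring the source in $\cL^{\infty}_\rho$ rather than $\cL^{1}_\rho$ inside the kernel integral ``gains an extra factor of $\alpha$''. Consequently the perturbation enters \eqref{BSbd-u1} as $\lambda^{2}\|\del_{\wtd\theta}^{-1}\wtd R_{\Delta}\wtd\phi\|_{\cW^{k,\infty}_\rho}$, and since $\del_{\wtd\theta}^{-1}\wtd R_{\Delta}$ is effectively first order in $\wtd\phi$ (the paraproduct loss $|\alpha'|^{2}/|\alpha|\le|\alpha'|(1+|\alpha-\alpha'|)$ being absorbed by the analytic decay of the coefficients), the algebra \eqref{Y-algebra} gives the bound \eqref{analytic-Rdelta} by $\lambda^{2}\big(\|\del_{\wtd\theta}\wtd\phi\|_{\cW^{k,\infty}_\rho}+\|\del_{\wtd z}\wtd\phi\|_{\cW^{k,\infty}_\rho}\big)$, i.e.\ $\lambda^{2}$ times the velocity norm itself, which is precisely what absorption requires. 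Without this $|\alpha|^{-1}$ kernel gain (or an equivalent estimate added to Proposition \ref{prop-elliptic}), your argument does not close.
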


\begin{proof}
Using \eqref{Lap-scaled} and \eqref{BSlaw}, the scaled stream function $\wtd \phi(\wtd t, \wtd z, \wtd \theta)$ solves
\[
\Delta_{\wtd z, \wtd \theta}\wtd \phi = \lambda^{-2} \wtd \omega -\lambda^2 \wtd R_{\Delta}\wtd \phi  ,\qquad {\wtd \phi}{}_{\vert_{\wtd z=0}}= 0
\]
on $\TT \times \RR_+$, and so the elliptic theory, Proposition \ref{prop-elliptic}, developed in the previous section can be applied, yielding
\begin{equation}\label{BSbd-u1}
\begin{aligned}
 \|u\|_{\cW^{k,\infty}_\rho }  &\lesssim \|\omega\|_{\cW^{k,1}_\rho} + \|\omega\|_{H^{k+1}(\{\lambda d(x,\partial\Omega)\ge \delta_0+\rho\})}
 \\&\quad + \lambda^2 \| \partial_{\wtd \theta}^{-1} \wtd R_{\Delta}\wtd \phi \|_{\cW^{k,\infty}_\rho} + \lambda^2 \| \wtd R_{\Delta}\wtd \phi \|_{H^{k+1}(\{\wtd z\ge \delta_0+\rho\})}  .
 \end{aligned}\end{equation}
It thus remains to bound $ \wtd R_{\Delta}\wtd \phi$. Recall from \eqref{def-RDelta} that
$$
\wtd R_\Delta  = \wtd m(\wtd z,\wtd \theta) \del^2_{\wtd \theta} +  \frac{\wtd \gamma}{1+\lambda^2\wtd z\wtd \gamma} \del_{\wtd z} -\frac{\wtd z\wtd \gamma'}{(1+\lambda^2\wtd z\wtd \gamma)^3} \del_{\wtd \theta}  ,\qquad \wtd m(\wtd z,\wtd\theta)= -
 \frac{2\wtd z\wtd \gamma+\lambda^2(\wtd z\wtd \gamma)^2}{(1+\lambda^2\wtd z\wtd \gamma)^2} .
$$
Thanks to the analyticity of the boundary, the coefficients are clearly bounded in $\cW^{k,\infty}_\rho$. Therefore, using a similar algebra as in \eqref{Y-algebra}, we bound
\begin{equation}\label{analytic-Rdelta} \lambda^2\| \partial_{\wtd \theta}^{-1}\wtd R_\Delta \wtd \phi \|_{\cW^{k,\infty}_\rho} \lesssim  \lambda^2 \| \partial_{\wtd \theta}\wtd \phi \|_{\cW^{k,\infty}_\rho} + \lambda^2 \| \partial_{\wtd z}\wtd \phi \|_{\cW^{k,\infty}_\rho}\end{equation}
That is, this term can be absorbed into the left hand side of \eqref{BSbd-u1}, upon taking $\lambda$ sufficiently small. As for the last term in \eqref{BSbd-u1}, we note that for large $\wtd z$, $|\wtd m(\wtd z,\wtd\theta)| \lesssim \lambda^{-2}$, which in particular proves that there is no growth in $\wtd z$. This gives
\begin{equation}\label{Sobolev-Rdelta}
\begin{aligned}
 \lambda^2 \| \wtd R_{\Delta}\wtd \phi \|_{H^{k+1}(\{\wtd z\ge \delta_0+\rho\})}
 &\lesssim  \| \phi \|_{H^{k+3}(\{\lambda d(x,\partial\Omega)\ge \delta_0+\rho\})}
 \\
 &\lesssim  \lambda^2 \| \wtd \phi \|_{\cW^{k,\infty}_\rho} + \| \omega \|_{H^{k+1}(\{\lambda d(x,\partial\Omega)\ge \delta_0 /2\})}
, \end{aligned}
 \end{equation}
 in which the last estimate follows from the standard elliptic theory in Sobolev spaces.
 The proposition follows.
 \end{proof}

\subsection{Bilinear estimates}

In this section, we show that the Sobolev-analytic norm is well adapted to treat the nonlinear $u \cdot \nabla \omega$. We have the following lemma.

\begin{lemma}\label{lem-bilinear}
For any $\omega$ and $\omega'$,
denoting by $u$ the velocity related to $\omega$, we have
$$
\begin{aligned}
 \| u\cdot \nabla \omega' \|_{\cL^1_\rho} &\le C \Big( \| \omega\|_{\cL^1_\rho} + \|\omega\|_{H^{1}(\{\lambda d(x,\partial\Omega)\ge \delta_0\})} \Big)\| \partial_{\wtd \theta}\omega'\|_{\cL^1_\rho}
 \\& \quad + C \Big ( \|\omega\|_{\cL^1_\rho} + \|\partial_{\wtd \theta}\omega\|_{\cL^1_\rho} + \|\omega\|_{H^{1}(\{\lambda d(x,\partial\Omega)\ge \delta_0\})} \Big) \| \wtd z \partial_{\wtd z}\omega'\|_{\cL^1_\rho} .
 \end{aligned}$$
\end{lemma}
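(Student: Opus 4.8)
The plan is to bound the nonlinear term $u\cdot\nabla\omega'$ in the $\cL^1_\rho$ norm by decomposing the convective derivative into its tangential and normal components relative to the geodesic coordinates. In the scaled $(\wtd z,\wtd\theta)$ variables, the advection operator $u\cdot\nabla$ becomes, up to the Jacobian factor $J=1+\lambda^2\wtd z\wtd\gamma$ which is bounded above and below, a combination $u_\tau\,\partial_{\wtd\theta}+u_n\,\partial_{\wtd z}$ where $u_\tau=u\cdot\vec\tau$ and $u_n=u\cdot\vec n$. Thus I expect to write
\[
u\cdot\nabla\omega' \;\approx\; u_\tau\,\partial_{\wtd\theta}\omega' + u_n\,\partial_{\wtd z}\omega',
\]
and estimate each piece separately using the algebra property \eqref{Y-algebra}, namely $\|fg\|_{\cL^1_\rho}\le\|f\|_{\cL^\infty_\rho}\|g\|_{\cL^1_\rho}$.

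\textbf{First} I would handle the tangential term. Applying \eqref{Y-algebra} gives $\|u_\tau\,\partial_{\wtd\theta}\omega'\|_{\cL^1_\rho}\le\|u_\tau\|_{\cL^\infty_\rho}\|\partial_{\wtd\theta}\omega'\|_{\cL^1_\rho}$, and the factor $\|u_\tau\|_{\cL^\infty_\rho}\le\|u\|_{\cW^{0,\infty}_\rho}$ is then controlled by the Biot--Savart estimate \eqref{laplace-5} of Proposition \ref{inverseLaplace}, which yields $\|u\|_{\cW^{0,\infty}_\rho}\lesssim\|\omega\|_{\cL^1_\rho}+\|\omega\|_{H^1(\{\lambda d(x,\partial\Omega)\ge\delta_0\})}$. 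This matches the first line of the claimed bound.

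\textbf{The main obstacle} is the normal term $u_n\,\partial_{\wtd z}\omega'$, since the desired conclusion contains only $\wtd z\,\partial_{\wtd z}\omega'$ rather than $\partial_{\wtd z}\omega'$ itself; a bare normal derivative is not directly controlled in these spaces. The key structural fact that saves this is that the normal velocity $u_n=u\cdot\vec n$ vanishes on the boundary $\wtd z=0$ because of the impermeability condition built into the Biot--Savart law with zero Dirichlet data. Concretely $u_n=\tfrac{1}{J}\partial_{\wtd\theta}\wtd\phi$ (up to the $\lambda$ scaling), which vanishes at $\wtd z=0$, so I write $u_n=\wtd z\cdot(\tfrac{1}{\wtd z}u_n)$ and absorb the $\wtd z$ factor into the derivative: $u_n\,\partial_{\wtd z}\omega'=(\tfrac{1}{\wtd z}u_n)\,(\wtd z\,\partial_{\wtd z}\omega')$. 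This converts the dangerous term into the product of a bounded quantity $\tfrac{1}{\wtd z}u_n=\tfrac{1}{\wtd z J}\partial_{\wtd\theta}\wtd\phi$ and the admissible quantity $\wtd z\,\partial_{\wtd z}\omega'$.

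\textbf{Finally} I would again invoke \eqref{Y-algebra} to obtain $\|u_n\,\partial_{\wtd z}\omega'\|_{\cL^1_\rho}\le\|\tfrac{1}{\wtd z}u_n\|_{\cL^\infty_\rho}\|\wtd z\,\partial_{\wtd z}\omega'\|_{\cL^1_\rho}$, and then control $\|\tfrac{1}{\wtd z}u_n\|_{\cL^\infty_\rho}$ by the second estimate of \eqref{laplace-5}, which is precisely designed to bound the weighted stream function $\|\tfrac{1}{\wtd z}\partial_{\wtd\theta}\wtd\phi\|_{\cW^{0,\infty}_\rho}$ and produces the factor $\|\omega\|_{\cL^1_\rho}+\|\partial_{\wtd\theta}\omega\|_{\cL^1_\rho}+\|\omega\|_{H^1(\{\lambda d(x,\partial\Omega)\ge\delta_0\})}$ appearing in the second line of the lemma. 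Collecting the tangential and normal contributions gives exactly the stated inequality. The Jacobian factor $1/J$ and curvature coefficients are analytic and bounded in $\cL^\infty_\rho$ by the analyticity of the boundary, so they are harmless multipliers throughout.
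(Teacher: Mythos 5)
Your proposal is correct and follows essentially the same route as the paper: the paper likewise writes $u\cdot\nabla\omega'$ in the scaled geodesic coordinates as $\frac{\lambda^2}{J}(\partial_{\wtd\theta}\wtd\phi)(\partial_{\wtd z}\wtd\omega') - \frac{\lambda^2}{J^2}(\partial_{\wtd z}\wtd\phi)(\partial_{\wtd\theta}\wtd\omega')$ (your normal and tangential pieces, respectively), applies the algebra property \eqref{Y-algebra}, and bounds $\|\partial_{\wtd z}\wtd\phi\|_{\cL^\infty_\rho}$ and $\|\tfrac{1}{\wtd z}\partial_{\wtd\theta}\wtd\phi\|_{\cL^\infty_\rho}$ by the two estimates of Proposition \ref{inverseLaplace}, including your key step of pairing $\tfrac{1}{\wtd z}\partial_{\wtd\theta}\wtd\phi$ with $\wtd z\,\partial_{\wtd z}\omega'$ via the boundary vanishing of the stream function.
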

\begin{proof} By definition, the $\cL^1_\rho$ norm is defined near the boundary $\{\lambda d(x,\partial\Omega) \le \delta_0 + \rho\}$, on which we can write
\[
u\cdot\nabla \w'=\frac{1}{ 1 + z\gamma(\theta) }\pt_\theta\phi\pt_z\w' - \frac{1}{( 1 + z\gamma(\theta) )^2}\pt_z\phi \pt_\theta\w'
\]
with $\Delta \phi = \omega$. In the rescaled variable $(\wtd z,\wtd \theta)$, we get
\[
u\cdot\nabla \w'= \frac{\lambda^2}{1+\lambda^2\wtd z\wtd \gamma(\wtd \theta)} (\pt_{\wtd \theta}\wtd \phi)(\pt_{\wtd z}\wtd \w ') -\frac{\lambda^2}{(1+\lambda^2\wtd z\wtd \gamma(\wtd \theta))^2} (\pt_{\wtd z}\wtd \phi)(\pt_{\wtd \theta}\wtd \w ')
\]
Note that thanks to the analyticity of $\partial \Omega$, the coefficient $(1+\lambda^2\wtd z\wtd \gamma(\wtd \theta))^{-1}$ is bounded in $\cL^\infty_\rho $. Using \eqref{Y-algebra} and Proposition \ref{inverseLaplace}, we bound

$$
\begin{aligned}
 \| (\pt_{\wtd z}\wtd \phi)(\pt_{\wtd \theta}\wtd \w ') \|_{\cL^1_\rho} &\lesssim \| \pt_{\wtd z}\wtd \phi \|_{\cL_\rho^\infty} \|\pt_{\wtd \theta}\wtd \w ' \|_{\cL^1_\rho}
 \\&\lesssim
 \Big( \| \omega\|_{\cL^1_\rho} + \|\omega\|_{H^{1}(\{\lambda d(x,\partial\Omega)\ge \delta_0\})} \Big)\| \partial_{\wtd \theta}\omega'\|_{\cL^1_\rho}
\\ \| (\pt_{\wtd \theta}\wtd \phi)(\pt_{\wtd z}\wtd \w ') \|_{\cL^1_\rho} &\lesssim \| \frac{1}{\wtd z}\pt_{\wtd \theta}\wtd \phi \|_{\cL_\rho^\infty} \| \wtd z\pt_{\wtd z}\wtd \w ' \|_{\cL^1_\rho}
 \\&\lesssim \Big( \|\omega\|_{\cL^1_\rho} + \|\partial_{\wtd \theta}\omega\|_{\cL^1_\rho} + \|\omega\|_{H^{1}(\{\lambda d(x,\partial\Omega)\ge \delta_0\})} \Big) \| \wtd z \partial_{\wtd z}\omega'\|_{\cL^1_\rho}
 \end{aligned}$$
giving the lemma.
\end{proof}

%%%%%%%%%%%%%%%
\subsection{Semigroup estimates in the half-plane}\label{sec-Stokes}
%%%%%%%%%%%%%%%

In this section, we give bounds on the Stokes semigroup $e^{\nu t S}$ in the analytic spaces $\cW^{k,1}_\rho$ on the half-plane $\RR_+\times \TT$. We also denote by $\Gamma(\nu t) = e^{\nu  t S}(\mathcal{H}^1_{\{\wtd z=0\} \times \TT})$ the trace of the semigroup on the boundary, with $\mathcal{H}^1_{\{\wtd z=0\}\times \TT}$ being the one-dimensional Hausdorff measure restricted on the boundary. The results in this section are an easy adaptation from those obtained in \cite{NN18}, where the analytic spaces contained no cutoff in $z$.
Precisely, we consider
\begin{equation}\label{lin-Stokes}
\begin{aligned}
(\pt_t-\nu \triangle_{z,\theta} ) \w &=0
\\
\nu (\partial_z + |\partial_{\theta}|)\omega_{\vert_{z=0}} &=0
\end{aligned}\end{equation}
on $\RR_+\times \TT$ (where we drop titles for sake of presentation). We obtain the following proposition.

\begin{proposition}\label{prop-Stokes}

Let $e^{\nu t S}$ be the semigroup of the linear Stokes problem \eqref{lin-Stokes}, and let $\Gamma(\nu t)g$ be its trace on the boundary.
 Then, for any $t\ge 0$, $\rho>0$, and $k\ge 0$, there hold
\begin{equation}\label{exp-SW}
\begin{aligned}
\| e^{\nu t S} f\|_{\cW^{k,1}_\rho} &\le C_0 \| f\|_{\cW^{k,1}_\rho} +  \| z f\|_{H^{k+1}(z\ge \delta_0 + \rho)}
\\
\| \Gamma(\nu t)g\|_{\cW^{k,1}_\rho} &\le C_0 \sum_{\alpha\in \ZZ}| \alpha^k g_\alpha| e^{\epsilon_0 (\delta_0 + \rho)|\alpha|}
\end{aligned}\end{equation}
uniformly in the inviscid limit.
\end{proposition}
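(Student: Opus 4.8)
The plan is to solve the linear Stokes problem \eqref{lin-Stokes} explicitly in the Fourier variable $\alpha$ (conjugate to $\theta$), reducing it to a one-dimensional heat equation on the half-line with the nonlocal Robin-type boundary condition $\nu(\partial_z + |\alpha|)\omega_\alpha|_{z=0}=0$. For each fixed $\alpha$, the operator $\partial_z^2 - \text{(implicit)}$ under this boundary condition has an explicit heat kernel $G_\alpha(t,z,y)$, which is the standard Gaussian heat kernel on the half-line corrected by a reflection term and an exponential correction reflecting the $|\alpha|$ in the boundary condition. The first step is therefore to write down this kernel explicitly and record its Gaussian-type pointwise bounds, together with the crucial observation that for $\alpha\neq 0$ the boundary operator $\partial_z+|\alpha|$ effectively damps the solution; the case $\alpha=0$ is trivial since the relevant Fourier mode vanishes by the argument already used in Lemma \ref{lem-DN}.

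Next I would establish the weighted $L^1$ bound \eqref{exp-SW} mode by mode. The key is that the analytic weight $e^{\epsilon_0(\delta_0+\rho-\Re\wtd z)|\alpha|}$ interacts well with the Gaussian kernel: since the heat kernel is a bounded $L^1$-in-$y$ operator and the weight grows at most like $e^{\epsilon_0|\alpha|\,|z-y|}$ as one moves in $z$, one can absorb the weight into a shifted Gaussian, picking up only a constant factor $C_0$ uniform in $\nu t$ and in $\alpha$. This is exactly the computation carried out in \cite{NN18}; the only novelty here is the cutoff, which produces a contribution from the region $\{z\ge\delta_0+\rho\}$ where the analytic weight has been switched off. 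On that far region the kernel is still Gaussian, and using the extra factor of $z$ in $\| zf\|_{H^{k+1}(z\ge\delta_0+\rho)}$ one controls the tail by a Sobolev norm via Cauchy-Schwarz in $y$, which accounts for the second term on the right of the first estimate. The derivative count $k$ is handled by commuting $\partial_\theta^i(z\partial_z)^j$ through the kernel, using that these operators either commute with $\partial_z^2$ or produce lower-order terms with bounded coefficients, exactly as in the definition \eqref{def-W1k}.

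For the trace estimate, I would apply the boundary trace of the semigroup to a measure $g$ supported on $\{z=0\}$: the result is $\Gamma(\nu t)g(z) = \int G_\alpha(t,z,0)\, g_\alpha\,$ summed over $\alpha$, so in each mode the $\cW^{k,1}_\rho$ norm reduces to estimating $\|e^{\epsilon_0(\delta_0+\rho-\Re z)|\alpha|}\alpha^k G_\alpha(t,\cdot,0)\|_{L^1_z}$. Since $G_\alpha(t,z,0)$ is again Gaussian in $z$ (times the $|\alpha|$ correction), its weighted $L^1_z$ norm is bounded uniformly in $t$ and $\nu$ by $C_0 e^{\epsilon_0(\delta_0+\rho)|\alpha|}$, which after multiplying by $|\alpha^k|$ and summing gives the second line of \eqref{exp-SW}.

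The main obstacle, and the step I would treat most carefully, is verifying uniformity in the inviscid limit $\nu\to 0$: both the kernel bounds and the absorption of the analytic weight must hold with constants independent of $\nu t$. The delicate point is that the diffusion time scale shrinks as $\nu\to 0$, so one cannot gain smallness from the semigroup; instead the argument must rely purely on the boundedness (not decay) of the weighted heat operator and on the damping built into the boundary condition $\partial_z+|\alpha|$, which is precisely what makes the Stokes semigroup (as opposed to the heat semigroup with Dirichlet or Neumann data) bounded uniformly on these analytic spaces. I would therefore devote the bulk of the proof to checking that the reflection and correction terms in $G_\alpha$ carry the correct signs so that no spurious growth in $|\alpha|$ or in $\nu t$ appears, citing \cite{NN18} for the template and adapting it only where the $z$-cutoff enters.
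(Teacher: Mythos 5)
Your proposal follows essentially the same route as the paper's proof: Fourier decomposition in $\theta$, the explicit Stokes kernel with its Gaussian-plus-boundary-corrector bounds imported from \cite{NN18}, absorption of the analytic weight $e^{\epsilon_0(\delta_0+\rho-\Re \wtd z)|\alpha|}$ into the kernel (using either the Gaussian decay or the diffusion factor $e^{-\alpha^2\nu t}$), control of the far region $\{z\ge \delta_0+\rho\}$ through $\| z f\|_{H^{k+1}}$ via Cauchy--Schwarz, and the trace bound from the kernel evaluated at $y=0$. One minor inaccuracy: the zero mode of $f$ need not vanish here (that observation in Lemma \ref{lem-DN} concerned the harmonic extension, not general data), but since the analytic weight equals $1$ when $\alpha=0$ the Neumann heat kernel gives that mode's bound directly, so nothing in your argument is affected.
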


\begin{proof} The proof follows closely from that in \cite{NN18}. Indeed, taking the Fourier transform of the semigroup $e^{\nu t S}$ in variable $\theta$, we obtain
\begin{equation}\label{semigroup-exp1}
 (e^{\nu t S} f)_\alpha (z) =  \int_0^\infty  G_\alpha(t,y;z) f_{\alpha} (y) \; dy, \qquad  (\Gamma(\nu t)g)_\alpha(z) = G_\alpha(t, 0; z) g_\alpha,
\end{equation}
for each Fourier variable $\alpha \in \ZZ$, where $G_\alpha(t,y;z)$ is the corresponding Green function. We recall the following result of Proposition 3.3 from \cite{NN18} that
\begin{equation}\label{Stokes-tG}
G_\alpha(t,y;z) = H_\alpha(t,y;z) + R_\alpha (t,y;z),
\end{equation}
where
$$
\begin{aligned}
H_\alpha(t,y;z) & = \frac{1}{\sqrt{\nu t}} \Big( e^{-\frac{|y-z|^{2}}{4\nu t}} +  e^{-\frac{|y+z|^{2}}{4\nu t}} \Big) e^{-\alpha^{2}\nu t},
\\ | \partial_z^k R_\alpha (t,y;z)| &\lesssim \mu_f^{k+1} e^{-\theta_0\mu_f |y+z|} +  (\nu t)^{-\frac{k+1}{2}}e^{-\theta_0\frac{|y+z|^{2}}{\nu t}}  e^{-\frac18 \alpha^{2}\nu t} ,
\end{aligned}$$
for $y,z\ge 0$, $k\ge 0$, and for some $\theta_0>0$ and for $\mu_f = |\alpha| + \frac{1}{\sqrt \nu}$. In particular, $\|G_\alpha(t,y;\cdot)\|_{L^1_\rho} \lesssim 1$, for each fixed $y,t$.

Now, for $z, y \le \delta_0 + \rho$, we note that
\begin{equation}\label{exp-a}
\begin{aligned}
e^{- a|y\pm z|} e^{-\epsilon_0 (\delta_0 + \rho- y) |\alpha|}
&= e^{-a|y\pm z| + \epsilon_0 |\alpha| (y - z)} e^{-\epsilon_0 (\delta_0 + \rho- z) |\alpha|}
\\
&\le e^{- (a - \epsilon_0 |\alpha|) |y\pm z| } e^{-\epsilon_0 (\delta_0 + \rho- z) |\alpha|}
\end{aligned}
\end{equation}
for any real number $a$ and for $\epsilon_0$ sufficiently small. Taking $a = \frac12 \theta_0 \mu_f$, we have $a \ge \epsilon_0 |\alpha| $ and so
$$e^{-\theta_0\mu_f |y+z|}e^{-\epsilon_0 (\delta_0 + \rho- y) |\alpha|}   \le e^{-\epsilon_0 (\delta_0 + \rho- z) |\alpha|} e^{- \frac12\theta_0\mu_f |y+z|} $$
On the other hand, taking $a =\frac12\theta_0 \frac{|y\pm z|}{\nu t}$ in \eqref{exp-a},  we have either $a \ge \epsilon_0 |\alpha|$ or $\frac12 \theta_0 \alpha^2 \nu t \ge \epsilon_0 |\alpha| |y\pm z|$. Therefore, we have
$$e^{-\theta_0\frac{|y+z|^{2}}{\nu t}}  e^{-\theta_0 \alpha^{2}\nu t}  e^{-\epsilon_0 (\delta_0 + \rho- y) |\alpha|}   \le  e^{-\frac12 \theta_0\frac{|y+z|^{2}}{\nu t}}  e^{-\epsilon_0 (\delta_0 + \rho- z) |\alpha|} .$$
This proves that for $z\le \delta_0 + \rho$,
$$
\begin{aligned}
&e^{\epsilon_0 (\delta_0 + \rho- z) |\alpha|}  \int_0^{\delta_0 + \rho} |G_\alpha(t,y;z) f_{\alpha} (y)| \; dy
\\&\le \int_0^{\delta_0 + \rho} \Big[ (\nu t)^{-\frac{1}{2}}e^{- \frac12\theta_0\frac{|y\pm z|^{2}}{\nu t}} + \mu_f e^{- \frac12\theta_0\mu_f |y+z|} \Big]  |e^{\epsilon_0 (\delta_0 + \rho- y) |\alpha|}  f_{\alpha} (y) |\; dy .
\end{aligned}$$
Since the term in the bracket is bounded in $L^1_z$ norm, we have
$$
\begin{aligned}
\Big\|e^{\epsilon_0 (\delta_0 + \rho- z) |\alpha|}  \int_0^{\delta_0 + \rho} G_\alpha(t,y;z) f_{\alpha} (y) \; dy\Big\|_{\cL^1_\rho}
\lesssim \|e^{\epsilon_0 (\delta_0 + \rho- y) |\alpha|}  f_{\alpha} \|_{\cL^1_\rho} .
\end{aligned}$$
Taking the summation in $\alpha$ yields the stated bounds for this term.

Next, consider the case when $y\ge \delta_0 + \rho \ge z$. In this case, we simply use
$$e^{- \epsilon_0 |\alpha| |y-z|} \le e^{-\epsilon_0 |\alpha| (\delta_0 + \rho - z)},$$
giving the right analyticity weight in $z$. The control of the weight $e^{\epsilon_0 |\alpha| |y-z|} $ is done exactly as above, yielding
$$
\begin{aligned}
&e^{\epsilon_0 (\delta_0 + \rho- z) |\alpha|}  \int_{\delta_0 + \rho}^\infty |G_\alpha(t,y;z) f_{\alpha} (y)| \; dy
\\&\le \int_{\delta_0 + \rho}^\infty \Big[ (\nu t)^{-\frac{1}{2}}e^{- \frac12\theta_0\frac{|y\pm z|^{2}}{\nu t}} + \mu_f e^{- \frac12\theta_0\mu_f |y+z|} \Big]  | f_{\alpha} (y) |\; dy  .
\end{aligned}$$
Therefore,
$$
\begin{aligned}
\sum_\alpha \| e^{\epsilon_0 (\delta_0 + \rho- z) |\alpha|}  \int_{\delta_0 + \rho}^\infty |G_\alpha(t,y;z) f_{\alpha} (y)| \; dy \|_{\cL^1_\rho}
&\lesssim \sum_\alpha \| f_{\alpha} \|_{L^1(z\ge \delta_0 + \rho)}
\\
&\lesssim \| z f \|_{H^1(z\ge \delta_0 + \rho)}.
\end{aligned}$$

Similarly, from \eqref{semigroup-exp1}, the Fourier transform of the trace operator $\Gamma(\nu t)g$ is estimated by
$$
\begin{aligned}
|(\Gamma(\nu t)g)_\alpha(z)|
&\le |G_\alpha(t, 0; z) g_\alpha|
\\
& \le  \Big[ \mu_f e^{-\theta_0\mu_f |z|} +  (\nu t)^{-\frac12}e^{-\theta_0\frac{|z|^{2}}{\nu t}}  e^{-\frac18 \alpha^{2}\nu t}\Big] |g_\alpha|
\\
& \le  \Big[ \mu_f e^{- \frac12 \theta_0\mu_f |z|} +  (\nu t)^{-\frac12}e^{- \frac12 \theta_0\frac{|z|^{2}}{\nu t}} \Big] e^{-\epsilon_0 (\delta_0 + \rho - z)|\alpha|}|g_\alpha| e^{\epsilon_0 (\delta_0 + \rho)|\alpha|}
\end{aligned}
$$
in which the last inequality is a special case of the previous calculations for $y=0$ and $z\le \delta_0 + \rho$.
The bounds $\Gamma(\nu t)g$ are thus direct. Finally, the bounds on derivatives follow from the similar adaptation of derivatives bounds provided in \cite{NN18}. We skip repeating the details. \end{proof}

\subsection{Semigroup estimates near $\partial \Omega$}\label{sec-realStokes}

In this section, we provide bounds on the Stokes semigroup $e^{\nu t S}$, which will be used to estimate the vorticity $\omega^b$ (see Section \ref{sec-nonlinear}) near the boundary in the analytic spaces $\cW^{k,1}_\rho$. Precisely, we consider
\begin{equation}\label{real-Stokes}
\left\{
\begin{aligned}
 &\pt_t\w-\nu\triangle \w= 0\\
 &\nu(\pt_n+DN)\w_{\vert_{\partial \Omega}}=0
\end{aligned}
\right.
\end{equation}
in $\Omega$. We obtain the following proposition.

\begin{proposition}\label{prop-realStokes}
Let $e^{\nu t S}$ be the semigroup of the linear Stokes problem \eqref{real-Stokes}, and let $\Gamma(\nu t)$ be its trace on the boundary. Fix any finite time $T$.
Then, for sufficiently small $\lambda$, and for any $0\le t\le T$, $\rho>0$, and $k\ge 0$, there hold
\begin{equation}\label{exp-realSW}
\begin{aligned}
\| e^{\nu t S} f\|_{\cW^{k,1}_\rho} &\le C_0 \| f\|_{\cW^{k,1}_\rho} + \| f \|_{H^{k+1}(\lambda d(x,\partial\Omega) \ge \delta_0/2)}
\\
\| \Gamma(\nu t)g\|_{\cW^{k,1}_\rho} &\le C_0 \sum_{\alpha\in\ZZ} |\alpha^k g_\alpha|e^{\epsilon_0 (\delta_0 + \rho)|\alpha|}
\end{aligned}\end{equation}
uniformly in the inviscid limit.
\end{proposition}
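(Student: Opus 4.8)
The plan is to reduce Proposition \ref{prop-realStokes} to the flat half-plane result, Proposition \ref{prop-Stokes}, by exploiting the scaled coordinates of Section \ref{sec-geodesic} and treating the difference between the curved operator and the flat one as a perturbation. In the rescaled variables $(\wtd z, \wtd \theta)$ the full Laplacian satisfies $\Delta = \lambda^2(\triangle_{\wtd z,\wtd\theta} + \lambda^2 \wtd R_\Delta)$ by \eqref{Lap-scaled}, and the Dirichlet-Neumann operator satisfies $\widetilde{DN} = |\partial_{\wtd\theta}| + \wtd B$ with $\wtd B$ bounded on $L^2(\partial\Omega)$ by Lemma \ref{lem-DN}. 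Thus the curved Stokes problem \eqref{real-Stokes} is, in scaled variables, precisely the flat problem \eqref{lin-Stokes} perturbed by the lower-order operator $\lambda^2 \wtd R_\Delta$ in the interior and by $\wtd B$ in the boundary condition, both carrying an explicit smallness factor controlled by $\lambda$.

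\textbf{Key steps.} First I would write the semigroup $e^{\nu t S}$ for the curved problem via Duhamel against the flat Stokes semigroup $e^{\nu t S_0}$ of Proposition \ref{prop-Stokes}: formally,
\begin{equation}\label{duhamel-real}
e^{\nu t S}f = e^{\nu t S_0}f + \nu \lambda^2 \int_0^t e^{\nu(t-s)S_0}\big(\wtd R_\Delta\, e^{\nu s S}f\big)\,ds + \big(\text{boundary correction from }\wtd B\big),
\end{equation}
where the boundary correction is expressed through the trace semigroup $\Gamma(\nu\cdot)$ acting on $\wtd B$ applied to the solution. Second, I would apply the flat bounds \eqref{exp-SW} to each term on the right of \eqref{duhamel-real}: the leading term gives exactly the $C_0\|f\|_{\cW^{k,1}_\rho}$ plus Sobolev remainder required by \eqref{exp-realSW}. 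Third, for the interior perturbation term, I would use that $\wtd R_\Delta$ is a second-order operator with analytic, bounded coefficients (no growth in $\wtd z$, as noted after \eqref{def-RDelta}), so that $\|\wtd R_\Delta g\|_{\cW^{k,1}_\rho}$ is controlled by $\|g\|_{\cW^{k+2,1}_\rho}$ up to the $\lambda^{-2}$-type factors; paired against the $\lambda^2$ prefactor and the smoothing/decay of $e^{\nu(t-s)S_0}$, this term is absorbed into the left-hand side for $\lambda$ small. Fourth, the $\wtd B$-boundary term is handled with the trace bound in \eqref{exp-SW}, using the $L^2 \to L^2$ boundedness of $\wtd B$ from Lemma \ref{lem-DN} together with the extra $\lambda$-smallness in its definition \eqref{def-BDN}. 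A Gronwall/fixed-point argument in the norm $\sup_{0\le t\le T}\|\cdot\|_{\cW^{k,1}_\rho}$ then closes the estimate on the finite interval $[0,T]$, and the trace bound in \eqref{exp-realSW} follows by taking the boundary trace of \eqref{duhamel-real} and invoking the trace part of \eqref{exp-SW}.

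\textbf{Main obstacle.} The delicate point is the interior perturbation term: $\wtd R_\Delta$ contains $\del^2_{\wtd\theta}$ and $\del_{\wtd z}$, so it loses two derivatives, and one must verify that the flat semigroup $e^{\nu(t-s)S_0}$ together with the $\lambda^2$ prefactor genuinely recovers this loss in the analytic norm $\cW^{k,1}_\rho$ uniformly in $\nu$. The analytic weight $e^{\epsilon_0(\delta_0+\rho-\Re\wtd z)|\alpha|}$ can absorb tangential derivatives $\del_{\wtd\theta}$ at the cost of shrinking the analyticity radius (as in \eqref{Y-derivative}), but here one wants $\nu$-uniform bounds without paying a radius loss that degenerates as $t\to 0$; the resolution is that the smallness factor $\lambda^2$ (rather than time-smoothing alone) provides the gain, so one must track carefully that $\wtd R_\Delta$ maps $\cW^{k+1,1}_\rho \to \cW^{k-1,1}_\rho$ with a $\lambda$-independent constant and that the flat semigroup is bounded on $\cW^{k,1}_\rho$ with the stated remainder. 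Once the roles of $\lambda$ (smallness) and the flat-semigroup bound (boundedness, not smoothing) are correctly separated, the perturbation is routinely absorbed and the curved bounds \eqref{exp-realSW} follow with constants independent of $\nu$ on $[0,T]$.
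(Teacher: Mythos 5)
Your skeleton coincides with the paper's: the proof there also writes the solution of the curved problem \eqref{real-Stokes}, in scaled variables, as a Duhamel perturbation \eqref{Duh-w0} of the flat semigroup of Proposition \ref{prop-Stokes}, with $\nu\lambda^2\wtd R_\Delta$ as an interior source and $\nu\wtd B\wtd\omega$ fed through the trace operator $\Gamma$. The genuine gap is in how you close the interior term. In your ``Main obstacle'' paragraph you claim the resolution is that $\lambda^2$-smallness does the work while the flat semigroup is used only through \emph{boundedness, not smoothing}. That cannot work: $\wtd R_\Delta$ loses two derivatives, so your own accounting gives
\begin{equation*}
\Big\| \nu\lambda^2\int_0^{\wtd t} e^{\nu(\wtd t-\wtd t')S_0}\,\wtd R_\Delta\wtd\omega(\wtd t')\,d\wtd t'\Big\|_{\cW^{k,1}_\rho} \ \lesssim\ \nu\lambda^2\int_0^{\wtd t}\big\|\wtd\omega(\wtd t')\big\|_{\cW^{k+2,1}_\rho}\,d\wtd t',
\end{equation*}
and the right-hand side is not controlled by the quantity $\sup_{\wtd t'}\|\wtd\omega(\wtd t')\|_{\cW^{k,1}_\rho}$ in which you propose to run the Gronwall/fixed-point argument. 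A small prefactor times an operator that loses derivatives is still an operator that loses derivatives; no choice of $\lambda$ makes the map a contraction at fixed $k$, so the estimate does not close as stated.

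The paper recovers the derivative loss by three mechanisms, none of which appear in your proposal. First, it splits the Green kernel as in \eqref{Stokes-tG} into the heat part $H_\alpha$ and the stationary Stokes part $R_\alpha$, and rewrites $\wtd R_\Delta$ in divergence form. For the heat part it uses genuine smoothing: first-order derivatives cost $(\nu(\wtd t-\wtd t'))^{-1/2}$, which is integrable against the $\nu$ prefactor, and the dangerous term $\del^2_{\wtd\theta}[\wtd m\wtd\omega]$ is handled on the Fourier side by the diffusion factor, $\nu\alpha^2\lambda^2\int_0^{\wtd t}e^{-\nu\alpha^2(\wtd t-\wtd t')}\,d\wtd t'\lesssim\lambda^2$, giving claim \eqref{claim-S1}. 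Second, for the stationary kernel $\mu_f e^{-\theta_0\mu_f(\wtd y+\wtd z)}$ it uses the structural vanishing $\wtd m=\wtd z\,\wtd m_1$ at the boundary together with $\mu_f\,\wtd z\,e^{-\theta_0\mu_f\wtd z}\lesssim 1$, which trades the boundary weight for one derivative (claim \eqref{claim-S2}). Third, the remaining \emph{single} tangential derivative in \eqref{claim-S2} is recovered by exactly the analyticity-radius-loss device you insist on avoiding: the iterative norm $A_0(\beta)$ with weight $(\rho_0-\rho-\beta\wtd t)^{\zeta}$, leading to the absorption constant $\lambda^2+\nu\lambda^2\beta^{-1}$ in \eqref{bootstrap-Stokes}. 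Your concern that a radius loss is incompatible with the statement is misplaced: since the smallness comes from $\lambda$ and $\nu$ rather than from $\beta$ large, $\beta$ can be taken small, the constraint $\beta\wtd t<\rho_0$ accommodates any fixed finite $T$, and this is precisely why the proposition is stated for a fixed finite time. Your treatment of the $\wtd B$ boundary term (trace bound plus Lemma \ref{lem-DN}) is consistent with the paper, but without the three ingredients above the interior perturbation term, which is the heart of the proof, is not controlled.
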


\begin{proof} In the scaled variables, the Stokes problem for near boundary vorticity $\omega$ becomes
\[\begin{cases}
&(\pt_{\wtd t}-\nu\triangle_{\wtd z,\wtd \theta})\wtd \w=-\lambda^2\nu \wtd R_\Delta\wtd \w\\
&\nu(\pt_{\wtd z}+|\partial_{\wtd \theta}|)\wtd \w|_{\wtd s=0}= -\nu \wtd B \wtd \omega
\end{cases}
\]
where $\wtd R_\Delta$ and $\wtd B$ are defined as in \eqref{def-RDelta} and \eqref{def-BDN}. Using the Duhamel, the solution with initial data $\omega_0 $ can be written as
 \beq\label{Duh-w0}
\wtd \omega(\wtd t)=e^{\nu \wtd t S}\wtd \omega_{0} - \nu \lambda^2\int_{0}^{\wtd t}e^{\nu(\wtd t-\wtd t')S} \wtd R_\Delta\wtd \w(\wtd t') \; d\wtd t' - \nu \int_0^{\wtd t} \Gamma(\nu (\wtd t-\wtd t'))\wtd B \wtd \omega(\wtd t') \;d\wtd t'.
\eeq
We shall bound the integral terms on the right in term of the initial data. Recall from \eqref{def-RDelta} that
$$
\wtd R_\Delta  = \wtd m(\wtd z,\wtd \theta) \del^2_{\wtd \theta} +  \frac{\wtd \gamma}{1+\lambda^2\wtd z\wtd \gamma} \del_{\wtd z} -\frac{\wtd z\wtd \gamma'}{(1+\lambda^2\wtd z\wtd \gamma)^3} \del_{\wtd \theta}  ,\qquad \wtd m(\wtd z,\wtd\theta)= -
 \frac{2\wtd z\wtd \gamma+\lambda^2(\wtd z\wtd \gamma)^2}{(1+\lambda^2\wtd z\wtd \gamma)^2} .
$$
We rewrite the operator in the following form
$$
\begin{aligned}
\wtd R_\Delta \wtd \omega &=  \del^2_{\wtd \theta} [ \wtd m \wtd \omega] - \del_{\wtd \theta} \Big[ 2\del_{\wtd \theta}  \wtd m \wtd \omega + \frac{\wtd z\wtd \gamma'}{(1+\lambda^2\wtd z\wtd \gamma)^3} \wtd \omega\Big ] +  \del_{\wtd z}\Big( \frac{\wtd \gamma}{1+\lambda^2\wtd z\wtd \gamma} \wtd \omega\Big)
\\&\quad + \Big[  (\del^2_{\wtd \theta}  \wtd m) -   \del_{\wtd z}\Big( \frac{\wtd \gamma}{1+\lambda^2\wtd z\wtd \gamma}\Big) +  \del_{\wtd \theta}\Big(\frac{\wtd z\wtd \gamma'}{(1+\lambda^2\wtd z\wtd \gamma)^3} \Big) \Big] \wtd \omega  .
\end{aligned}$$
We now bound each term appearing in the Duhamel formula \eqref{Duh-w0}. Thanks to the analyticity of the boundary, the coefficients are bounded in $\cW^{k,\infty}_\rho$. Now, recall from \eqref{Stokes-tG} that the Green function has two components:
$$ e^{\nu \wtd t S} = e^{\nu \wtd t S_H} + e^{\nu \wtd t S_R} $$
which corresponds to the Green kernel $H_\alpha$ (i.e., the heat kernel) and the other from the stationary Stokes kernel $R_\alpha$.

We first claim that
\begin{equation}\label{claim-S1}
\Big \|  \nu \lambda^2\int_{0}^{\wtd t}e^{\nu(\wtd t-\wtd t')S_H} \wtd R_\Delta\wtd \w(\wtd t') \; d\wtd t'\Big\|_{\cW^{k,1}_\rho} \lesssim  \lambda^2 \sup_{0\le \wtd t' \le \wtd t} \| \omega \|_{\cW^{k,1}_\rho} + \| \omega\|_{H^{k+1}(\lambda d(x,\partial\Omega) \ge \delta_0+\delta)}
. \end{equation}
For the heat semigroup, we may integrate by parts in $\wtd \theta$ or $\wtd z$. It follows directly from the representation of the Green function that derivatives of the semigroup $\nabla_{\wtd \theta, \wtd z} e^{\nu \wtd t S_H} $ are of order $(\nu \wtd t)^{-1/2}$ of the semigroup itself. Therefore, the first-order derivative term in $\wtd R_\Delta$ can be treated systematically as follows:
$$
\begin{aligned}
\nu \lambda^2 \Big\| \int_{0}^{\wtd t}e^{\nu(\wtd t-\wtd t')S_H} \nabla_{\wtd \theta, \wtd z } h(\wtd t')  \; d\wtd t' \Big\|_{\cW^{k,1}_\rho}
&\lesssim \nu \lambda^2 \int_{0}^{\wtd t}(\nu (\wtd t - \wtd t'))^{-1/2} \| h(\wtd t')\|_{\cW^{k,1}_\rho}   \; d\wtd t'
\\&\lesssim \sqrt \nu \lambda^2 \sup_{0\le \wtd t' \le \wtd t} \| h \|_{\cW^{k,1}_\rho} .
\end{aligned}$$
The zero-order term is treated similarly. The analysis doesn't apply directly to the second-order derivative term $\del^2_{\wtd \theta} [ \wtd m \wtd \omega]$ due to the singularity in time $(\nu t)^{-1}$, if integration by parts was to perform twice. However, in the Fourier variable $\alpha$, we compute
$$\nu \lambda^2\int_{0}^{\wtd t} (e^{\nu(\wtd t-\wtd t')S_H} \del^2_{\wtd \theta} [ \wtd m \wtd \omega])_\alpha (\wtd t') \; d\wtd t'
= \nu \alpha^2 \lambda^2\int_{0}^{\wtd t} \int_0^\infty H_\alpha(t,\wtd y;\wtd z) [ \wtd m \wtd \omega]_\alpha (\wtd t') \; d \wtd yd\wtd t'  .
$$
Observe that the Green kernel $H_\alpha$ has the diffusion term $e^{-\nu\alpha^2 \wtd t}$, for which we use
$$ \nu \alpha^2 \lambda^2\int_{0}^{\wtd t} e^{-\nu \alpha^2 (\wtd t - \wtd t') } d \wtd t' \lesssim \lambda^2$$
yielding the claim \eqref{claim-S1}.

Next, we claim that
\begin{equation}\label{claim-S2}
\Big \|  \nu \lambda^2\int_{0}^{\wtd t}e^{\nu(\wtd t-\wtd t')S_R} \wtd R_\Delta\wtd \w(\wtd t') \; d\wtd t'\Big\|_{\cW^{k,1}_\rho} \lesssim  \nu \lambda^2\int_{0}^{\wtd t} \| \partial_{\wtd \theta}\omega(\wtd t) \|_{\cW^{k,1}_\rho} \; d\wtd t+ \| \omega\|_{H^{k+1}(\lambda d(x,\partial\Omega) \ge \delta_0+\delta)}
. \end{equation}
It suffices to check for the stationary Green kernel $\mu_f e^{-\theta_0\mu_f(\wtd y+\wtd z)} $ and for the second-order derivative term $ \del^2_{\wtd \theta} [ \wtd m \wtd \omega]$ appearing in $ \wtd R_\Delta\wtd \w(\wtd t')$. For this term, we make use of the fact that $\wtd m$ vanishes at $\wtd z =0$; namely, we can write $\wtd m = \wtd z \wtd m_1$ and use
$\mu_f e^{-\theta_0\mu_f \wtd z} \wtd z \lesssim 1$, which controls one spatial derivative, since $\mu_f = |\alpha| + \nu^{-1/2}$. This proves the claim \eqref{claim-S2}.

Finally, putting the previous bounds together into the Duhamel representation \eqref{Duh-w0}, we have obtained
\begin{equation}\label{bootstrap-Stokes}
\begin{aligned}
\| \omega(\wtd t)\|_{\cW^{k,1}_\rho} &\lesssim \| \omega_{0}\|_{\cW^{k,1}_\rho}  + \| \omega_0\|_{H^{k+1}(\lambda d(x,\partial\Omega) \ge \delta_0+\delta)}
\\&\quad + \lambda^2 \sup_{0\le \wtd t' \le \wtd t} \| \omega (\wtd t') \|_{\cW^{k,1}_\rho} +  \nu \lambda^2\int_{0}^{\wtd t} \| \partial_{\wtd \theta}\omega(\wtd t) \|_{\cW^{k,1}_\rho} \; d\wtd t
\\&\quad + \| \omega\|_{H^{k+1}(\lambda d(x,\partial\Omega) \ge \delta_0+\delta)}
\end{aligned}\end{equation}
for any $k\ge 0$. The standard energy estimates for the heat equation (away from the boundary) yield
\begin{equation}\label{standard-heat}\| \omega\|_{H^{k+1}(\lambda d(x,\partial\Omega) \ge \delta_0+\delta)} \lesssim \| \omega_0 \|_{H^{k+1}(\lambda d(x,\partial\Omega) \ge \delta_0/2)}.
\end{equation}
It remains to treat the third and forth terms on the right hand side of \eqref{bootstrap-Stokes}. We bound these terms by iteration,
introducing $$
\begin{aligned}
A_0(\beta) : =&\quad \sup_{0\le  k\le 4} \big(  \sup_{0<\beta \wtd t< \rho_0} \sup_{0<\rho<\rho_0-\beta \wtd t} \Bigl{\{}
 \| \omega(\wtd t)\|_{\mathcal{W}^{k,1}_\rho} +  \|\partial_{\wtd \theta}\omega(\wtd t)\|_{\mathcal{W}^{k,1}_\rho}(\rho_0 - \rho- \beta \wtd t)^{\zeta}\Bigr{\}}\big) \end{aligned}
$$
for some $\zeta \in (0,1)$.
We bound
$$\begin{aligned}
 \nu \lambda^2\int_{0}^{\wtd t} \| \partial_{\wtd \theta}\omega(\wtd t) \|_{\cW^{k,1}_\rho} \; d\wtd t
&\le C_0\nu \lambda^2 A_0(\beta) \int_0^{\wtd t}  (\rho_0-\rho-\beta \wtd s)^{-\zeta}\; d\wtd s
\\&\le
C_0\nu \lambda^2 \beta^{-1}A_0(\beta) .
\end{aligned}$$
Next, we check the bound on $ \|\partial_{\wtd \theta}\omega(\wtd t)\|_{\mathcal{W}^{k,1}_\rho}$. We focus only the worst term as in \eqref{claim-S2}. Note that $\rho < \rho_0 - \beta \wtd t \le \rho_0 - \beta \wtd s$. Thus, we take $\rho' = \frac{\rho + \rho_0 - \beta s}{2}$ and bound
$$
\begin{aligned}
\Big \| & \nu \lambda^2 \partial_{\wtd \theta}\int_{0}^{\wtd t}e^{\nu(\wtd t-\wtd t')S_R} \wtd R_\Delta\wtd \w(\wtd t') \; d\wtd t'\Big\|_{\cW^{k,1}_\rho}
\\&\lesssim  \nu \lambda^2\int_{0}^{\wtd t}  \frac{1}{\rho' - \rho} \| \partial_{\wtd \theta}\omega(\wtd t) \|_{\cW^{k,1}_{\rho'}} \; d\wtd t+ \| \omega\|_{H^{k+1}(\lambda d(x,\partial\Omega) \ge \delta_0+\delta)}
\\&\le C_0 \nu \lambda^2\int_0^{\wtd t}  (\rho_0-\rho-\beta s)^{-1-\zeta}\; ds + \| \omega_0 \|_{H^{k+1}(\lambda d(x,\partial\Omega) \ge \delta_0/2)}
\\&\le C_0 \nu \lambda^2 \beta^{-1} A_0(\beta) (\rho_0-\rho-\beta \wtd t)^{-\zeta} + \| \omega_0 \|_{H^{k+1}(\lambda d(x,\partial\Omega) \ge \delta_0/2)} .
\end{aligned}$$
This proves that
$$ A_0(\beta ) \lesssim  \| \omega_{0}\|_{\cW^{k,1}_\rho}  + \| \omega_0\|_{H^{k+1}(\lambda d(x,\partial\Omega) \ge \delta_0/2)}  + \Big(\lambda^2 + \nu \lambda^2 \beta^{-1}\Big) A_0(\beta).$$
Taking $\lambda$ and $\nu$ small, the last term can be absorbed into the left hand side, completing the bounds on $A_0(\beta)$ or the $\cW^{k,1}_\rho$ norm for the vorticity. Note that we do not require $\beta$ to be sufficiently large (compared with the nonlinear iteration provided in the next section). As a consequence, the proposition holds for any given finite time.
\end{proof}

\section{Nonlinear analysis}\label{sec-nonlinear}

As already mentioned in the introduction, we construct the solutions to the Navier-Stokes equation via the vorticity formulation
\beq\label{NS-vor1}
\begin{aligned}
\partial_{t}\omega + u\cdot\nabla\omega =\nu\Delta\omega
\end{aligned}\eeq
together with the nonlocal boundary condition \eqref{BC-vor} and {with initial data $\omega_{\vert_{t=0}}=\omega_0$ satisfying 
\begin{equation}\label{assumption-data} \|\omega_0\|_{\mathcal{W}^{2,1}_\rho}  +  \|\omega_0\|_{H^4(\{\lambda d(x,\partial\Omega)\ge \delta_0/2\})} < \infty.\end{equation}}
Introduce the smooth cutoff function $\phi^b$ as in \eqref{def-phib}, and write
\begin{equation}\label{decomp-omega} \omega = \omega^b + \omega^i, \qquad \omega^b = \phi^b \omega, \qquad \omega^i = (1-\phi^b)\omega.\end{equation}
We also define the corresponding velocity field through the Biot-Savart law
\begin{equation}\label{decomp-u}
 u = u^b + u^i, \qquad u^b = \nabla^\perp \Delta^{-1} \omega^b, \qquad u^i = \nabla^\perp \Delta^{-1} \omega^i.
 \end{equation}
This yields
\begin{equation}\label{vorticity-bdry}
\left\{
\begin{aligned}
\pt_t \w^b + u\cdot\nabla \w^b &= \nu\triangle \w^b
\\
\nu (\partial_n + DN)\omega^b_{\vert_{\partial \Omega}} &= [\partial_n \Delta^{-1} ( u \cdot \nabla \omega)]
_{\vert_{\partial \Omega}}
\end{aligned}\right.
\end{equation}
for the vorticity near the boundary, and
\begin{equation}\label{vorticity-away}
\left\{
\begin{aligned}
\pt_t \w^i + u\cdot \nabla \omega^i &=\nu\triangle \w^i
\\
\w^i_{\vert{\partial\Omega}} & = 0
\end{aligned}\right.
\end{equation}
for the vorticity away from the boundary. Here, we note that the boundary condition on $\omega^i$ follows directly from the definition \eqref{decomp-omega}, while the boundary condition on $\omega^b$ was due to the fact that $DN \omega^i = 0$ by Lemma \ref{lem-DN}. We also note that the velocity field $u$ that appears in both the systems is the full velocity, which is the summation of $u^b$ and $u^i$ generated by $\omega^b$ and $\omega^i$, respectively.

We shall construct the near boundary vorticity solving \eqref{vorticity-bdry} through the semigroup of the Stokes problem. Indeed, we have the following standard Duhamel's integral representation, written in the scaled variables,
 \beq\label{Duh-w}
\wtd \omega(\wtd t)=e^{\nu \wtd t S}\wtd \omega_{0} +\int_{0}^{\wtd t}e^{\nu(\wtd t-\wtd t')S} f(\wtd t') \; d\wtd t' + \int_0^{\wtd t} \Gamma(\nu (\wtd t-\wtd t'))g(\wtd t') \;d\wtd t'
\eeq
where
\begin{equation}\label{forcingStokes1}
\begin{aligned}
f(\wtd t) =   - \lambda^{-2} u \cdot \nabla \omega^b ,
\qquad g(\wtd t )& =  \lambda^{-1}[\partial_n \Delta^{-1} ( u \cdot \nabla \omega)]
_{\vert_{\partial \Omega}} .
\end{aligned}\end{equation}
Here, $e^{\nu \wtd t S}$ denotes the semigroup of the corresponding Stokes problem and $\Gamma(\nu \wtd t)$ being its trace on the boundary; see Section \ref{sec-realStokes}.

\subsection{Global Sobolev-analytic norm}\label{sec-nonlinear}

We now introduce Sobolev-analytic norms to control global vorticity. Let us fix positive numbers $\rho_0, \delta_0,$ and $\zeta\in (0,1)$. Introduce the following family of nonlinear iterative norms for vorticity:
\beq\label{def-normw}
\bega
A(\beta) : =&\quad  \sup_{0<\lambda^2\beta t< \rho_0} \Big[ \sup_{0<\rho<\rho_0-\beta \lambda^2 t} \Bigl{\{}
 \| \omega(t)\|_{\mathcal{W}^{1,1}_\rho} +  \|\omega(t)\|_{\mathcal{W}^{2,1}_\rho}(\rho_0 - \rho- \lambda^2\beta t)^{\zeta}\Bigr{\}}
 \\&\quad +  \|\omega(t)\|_{H^4(\{\lambda d(x,\partial\Omega)\ge \delta_0/2\})} \Big]
\enda
\eeq
for a parameter $\beta>0$, with recalling $$ \| \omega(t)\|_{\mathcal{W}^{k,1}_\rho}  = \sum_{j+\ell \le k}  \|\partial_{\wtd \theta}^j (\wtd z\partial_{\wtd z})^\ell \omega(t)\|_{\cL^1_\rho}.$$
Note that by definition the norm $\|\cdot\|_{\cW^{k,1}_\rho}$ controls the analyticity of the vorticity near the boundary, precisely in the region
$\lambda d(x,\partial\Omega)\le \delta_0+\rho,$ while the $H^4$ norm is to control the Sobolev regularity away from the boundary. We shall show that the vorticity norm remains finite for sufficiently large $\beta$. The weight $(\rho_0-\rho-\lambda^2\beta t)^\zeta$, with a small $\zeta>0$, is standard in the literature to avoid time singularity when recovering the loss of derivatives (\cite{Asano, Caflisch}). See also \cite{GrenierNguyen3} for an alternative framework to construct analytic solutions through generator functions.

Our goal is to prove the following key proposition.

\begin{proposition}\label{prop-key} For $\beta>0$, there holds
$$ A(\beta) \le  C_0 \|\omega_0\|_{\mathcal{W}^{2,1}_\rho}  +  C_0\|\omega_0\|_{H^4(\{\lambda d(x,\partial\Omega)\ge \delta_0/2\})} + C_0 \beta^{-1} A(\beta)^2 . $$
\end{proposition}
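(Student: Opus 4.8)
The plan is to substitute the Duhamel representation \eqref{Duh-w} into each constituent of the norm $A(\beta)$ defined in \eqref{def-normw}, and to estimate the three resulting contributions---linear evolution, interior forcing $f=-\lambda^{-2}u\cdot\nabla\omega^b$, and boundary forcing $g=\lambda^{-1}[\partial_n\Delta^{-1}(u\cdot\nabla\omega)]_{\vert_{\partial\Omega}}$ from \eqref{forcingStokes1}---separately, supplementing them with a Sobolev energy estimate for the away-from-boundary piece \eqref{vorticity-away}. The linear term $e^{\nu\wtd t S}\wtd\omega_0$ is dispatched immediately by the semigroup bound \eqref{exp-realSW} of Proposition \ref{prop-realStokes}, which in the $\cW^{2,1}_\rho$ norm yields $C_0\|\omega_0\|_{\cW^{2,1}_\rho}$ plus a lower-order term absorbed by $C_0\|\omega_0\|_{H^4(\{\lambda d(x,\partial\Omega)\ge\delta_0/2\})}$; since the accompanying time weight $(\rho_0-\rho-\lambda^2\beta t)^\zeta$ is bounded by $\rho_0^\zeta$, this reproduces the first two terms on the right-hand side. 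Everything nonlinear must then be shown to be $\lesssim\beta^{-1}A(\beta)^2$, with all powers of the fixed small parameter $\lambda$ absorbed into $C_0$.

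For the interior forcing $\int_0^{\wtd t}e^{\nu(\wtd t-\wtd t')S}f(\wtd t')\,d\wtd t'$, I would apply \eqref{exp-realSW} to reduce to bounding $\|f\|_{\cW^{k,1}_\rho}$, then estimate the bilinear term by Lemma \ref{lem-bilinear}, lifted to the $\cW^{k,1}_\rho$ scale via the algebra \eqref{Y-algebra} and the velocity bound of Proposition \ref{inverseLaplace}. This makes $f$ quadratic in $\omega$, but at the cost of one derivative ($\pt_{\wtd\theta}\omega^b$ or $\wtd z\pt_{\wtd z}\omega^b$). Since we must remain uniform as $\nu\to0$, the lost derivative cannot be recovered from semigroup smoothing and is instead bought back from the analyticity margin: for the top-order norm carrying the weight $(\rho_0-\rho-\lambda^2\beta t)^\zeta$, I bound the derivative at radius $\rho$ by the norm at an intermediate radius $\rho'=\tfrac12(\rho+\rho_0-\beta\wtd s)$ through the Cauchy estimate \eqref{Y-derivative}, producing a factor $(\rho'-\rho)^{-1}\sim(\rho_0-\rho-\beta\wtd s)^{-1}$, while the companion factor is controlled by the unweighted $\cW^{1,1}_\rho$ part of $A(\beta)$. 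Working in scaled time, so that the weight reads $(\rho_0-\rho-\beta\wtd s)$, this leads to the integral
$$
\int_0^{\wtd t}(\rho_0-\rho-\beta\wtd s)^{-1-\zeta}\,d\wtd s\lesssim\beta^{-1}(\rho_0-\rho-\beta\wtd t)^{-\zeta},
$$
and multiplying by the outer weight produces exactly one power of $\beta^{-1}$ and two of the norm, i.e.\ a contribution $\lesssim\beta^{-1}A(\beta)^2$; the unweighted ($k=1$) piece is easier, the integrand being already integrable.

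The boundary forcing $\int_0^{\wtd t}\Gamma(\nu(\wtd t-\wtd t'))g(\wtd t')\,d\wtd t'$ is handled by the trace bound in \eqref{exp-realSW}, which reduces the task to controlling the analytic boundary norm $\sum_\alpha|\alpha^k g_\alpha|\,e^{\epsilon_0(\delta_0+\rho)|\alpha|}$. Writing $u\cdot\nabla\omega=\nabla\cdot(u\omega)$ (since $\nabla\cdot u=0$) exhibits $\partial_n\Delta^{-1}\nabla\cdot$ as a Dirichlet--Neumann-type operator of order zero on $\partial\Omega$, bounded on the analytic boundary norm by the same computation underlying Lemma \ref{lem-DN} and Proposition \ref{inverseLaplace}; applying it to the bilinear quantity $u\omega$ yields once more a quadratic bound in $\omega$, and the $|\alpha|^k$ growth from the trace is matched by distributing the $k$ tangential derivatives across $u\omega$ and invoking the weighted $\cW^{2,1}_\rho$ norm for the highest derivative. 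The same intermediate-radius/time-integration mechanism then delivers a $\beta^{-1}A(\beta)^2$ bound. Finally, the Sobolev component $\|\omega\|_{H^4(\{\lambda d(x,\partial\Omega)\ge\delta_0/2\})}$ is closed by a standard $H^4$ energy estimate for the transport--diffusion equation \eqref{vorticity-away}: the transport velocity is controlled through the elliptic bounds \eqref{laplace-5}, the cutoff commutators from $\phi^b$ are supported in the overlap region where both analytic and Sobolev norms are available, and Gronwall folds this piece into the same quadratic structure.

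I expect the boundary term to be the main obstacle: simultaneously matching the nonlocal operator $\partial_n\Delta^{-1}(u\cdot\nabla\omega)$ and the trace of the Stokes semigroup to the analyticity weight $e^{\epsilon_0(\delta_0+\rho)|\alpha|}$ of the $\cW^{k,1}_\rho$ norm, while absorbing the $|\alpha|^k$ factor from the trace and the derivative loss hidden in the bilinearity, is the most delicate balancing act and the place where the structure of the nonlocal boundary condition is genuinely used. By comparison, the interior term is a relatively routine Cauchy--Kovalevskaya-type closure once the scaling factors and the weighted-norm bookkeeping are tracked.
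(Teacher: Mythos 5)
Your proposal follows the paper's own proof essentially step for step: Duhamel's formula \eqref{Duh-w}, the semigroup and trace bounds of Proposition \ref{prop-realStokes}, the bilinear estimate of Lemma \ref{lem-bilinear} combined with the intermediate-radius device $\rho'=\tfrac12(\rho+\rho_0-\beta\wtd s)$ and the integral $\int_0^{\wtd t}(\rho_0-\rho-\beta\wtd s)^{-1-\zeta}\,d\wtd s\lesssim\beta^{-1}(\rho_0-\rho-\beta\wtd t)^{-\zeta}$, and an $H^4$ energy estimate plus Biot--Savart elliptic bounds for the interior piece \eqref{vorticity-away}. The only organizational difference is the boundary term: where you pass to the divergence form $\nabla\cdot(u\omega)$ and assert an order-zero mapping property of $\partial_n\Delta^{-1}\nabla\cdot$ on the analytic boundary norm, the paper cuts off the potential $\Phi^b=\phi^b\,\Delta^{-1}(u\cdot\nabla\omega)$ and bounds four explicit Green-kernel integrals $I_{1,\alpha},\dots,I_{4,\alpha}$ (curvature remainder, two cutoff commutators, and the main bilinear term), arriving at the same reduction \eqref{bdr term} to $\|u\cdot\nabla\omega^b\|_{\cW^{k,1}_\rho}$ plus quadratic Sobolev terms, after which both arguments close identically.
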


{

In Section \ref{sec-proof}, we will show that our main theorem, Theorem \ref{theo-main}, follows straightforwardly from Proposition \ref{prop-key}.
}

\subsection{Analytic bounds near the boundary}

In this section, we bound the vorticity near the boundary $\lambda d(x,\partial \Omega) \le \delta_0 + \rho_0$, on which by definition $\omega = \omega^b$ and therefore the Duhamel representation \eqref{Duh-w} holds. Let $\rho < \rho_0 - \lambda^2\beta t$. Recalling the notation $\wtd t = \lambda^2 t$ and using \eqref{Duh-w}, we bound
\begin{equation}\label{est-wbdry}
\| \wtd \omega(\wtd t)\|_{\cW_{\rho}^{k,1}} \le \|e^{\nu \wtd t S}\wtd \omega_{0}\|_{\cW_{\rho}^{k,1}} + \int_{0}^{\wtd t} \| e^{\nu(\wtd t-\wtd t')S} f(\wtd t')\|_{\cW_{\rho}^{k,1}} \; d\wtd t' + \int_0^{\wtd t} \|\Gamma(\nu (\wtd t-\wtd t'))g(\wtd t')\|_{\cW_{\rho}^{k,1}} \;d\wtd t'
\end{equation}
for $0<k\le  4$ and for $f,g$ defined as in \eqref{forcingStokes1}. Let us bound each term on the right. Using the semigroup estimates, Proposition \ref{prop-Stokes}, we  have
$$
\begin{aligned}
 \| e^{\nu \wtd t S} \wtd \omega_0\|_{\cW^{k,1}_\rho} &\le C_0 \| \wtd \omega_0\|_{\cW^{k,1}_\rho} +\| \wtd z \wtd \omega_0\|_{H^{k+1}(\wtd z\ge \delta_0 + \rho)}
 \\
 &\le C_0 \| \wtd \omega_0\|_{\cW^{k,1}_\rho} + \| \omega_0\|_{H^{k+1}(\lambda d(x,\partial\Omega)\ge \delta_0 + \rho)} .
 \end{aligned}$$
While for the second integral term in \eqref{est-wbdry}, we have
 $$ \int_{0}^{\wtd t} \| e^{\nu(\wtd t-\wtd t')S} f(\wtd t')\|_{\cW_{\rho}^{k,1}} \; d\wtd t' \lesssim  \int_{0}^{\wtd t}  \Big[ \| f(\wtd t')\|_{\cW_{\rho}^{k,1}} + \|\wtd z f(\wtd t') \|_{H^{k+1}(\wtd z \ge \delta_0 + \rho)} \Big] \; d\wtd t' .$$
Then, we use \eqref{forcingStokes1}, in the above formula with $f(\wtd t)$ replaced by $- \lambda^{-2} u \cdot \nabla \omega^b . $ First, using the standard elliptic theory for $k = 0,1,2$, we bound
$$
\begin{aligned}
\|\wtd z (u \cdot \nabla \omega^b)(\wtd t') \|_{H^{k+1}(\wtd z \ge \delta_0 + \rho)} &\lesssim  \| \omega \|_{H^4(\{\lambda d(x,\partial\Omega)\ge \delta_0 /2\})}^2
\lesssim A(\beta)^2.
\end{aligned}$$
Next, for  the analytic norm, with the bilinear estimates from Lemma \ref{lem-bilinear}, we have:
$$
\begin{aligned}
 \| u\cdot \nabla \omega^b \|_{\cL^1_\rho} &\le C \Big( \| \omega\|_{\cL^1_\rho} + \|\omega\|_{H^{1}(\{\lambda d(x,\partial\Omega)\ge \delta_0\})} \Big)\| \partial_{\wtd \theta}\omega^b\|_{\cL^1_\rho}
 \\& \quad + C \Big ( \|\omega\|_{\cL^1_\rho} + \|\partial_{\wtd \theta}\omega\|_{\cL^1_\rho} + \|\omega\|_{H^{1}(\{\lambda d(x,\partial\Omega)\ge \delta_0\})} \Big) \| \wtd z \partial_{\wtd z}\omega^b\|_{\cL^1_\rho}
 \\& \lesssim \| \omega \|_{\cW^{1,1}_\rho}^2 + \|\omega\|_{H^{1}(\{\lambda d(x,\partial\Omega)\ge \delta_0\})}^2
\\& \lesssim A(\beta)^2
\\
 \| u\cdot \nabla \omega^b \|_{\cW^{1,1}_\rho} & \lesssim \| \omega \|_{\cW^{1,1}_\rho} \| \omega \|_{\cW^{1,2}_\rho} + \|\omega\|_{H^2(\{\lambda d(x,\partial\Omega)\ge \delta_0\})}^2
\\& \lesssim A(\beta)^2 (\rho_0-\rho-\beta \wtd t)^{-\zeta}.
 \end{aligned}$$
Therefore,
$$\begin{aligned}
\int_0^{\wtd t}  \| u\cdot \nabla \omega^b \|_{\cW^{1,1}_\rho} \; d\wtd s
&\le C_0 A(\beta)^2 \int_0^{\wtd t}  (\rho_0-\rho-\beta \wtd s)^{-\zeta}\; d\wtd s
\\&\le
C_0 \beta^{-1} A(\beta)^2 .
\end{aligned}$$
Similarly, we consider the case when $k=2$. Noting $\rho < \rho_0 - \beta t \le \rho_0 - \beta s$, we take $\rho' = \frac{\rho + \rho_0 - \beta s}{2}$ and compute
$$\begin{aligned}
 \int_0^t \| u\cdot \nabla \omega^b \|_{\cW^{2,1}_\rho} \; ds
&\le C_0 \int_0^t\frac{1}{\rho' - \rho} \| u\cdot \nabla \omega^b \|_{\mathcal{W}^{1,1}_{\rho'}}\; ds
\\&\le C_0 A(\beta)^2 \int_0^t  (\rho_0-\rho-\beta s)^{-1-\zeta}\; ds
\\&\le C_0 \beta^{-1} A(\beta)^2 (\rho_0-\rho-\beta t)^{-\zeta}.
\end{aligned}$$

Finally, we treat the last integral term in \eqref{est-wbdry}. Precisely, we will show that, for $k\le 2$:
\beq\label{bdr term}
\bega
\|\Gamma(\nu(\wtd t-\wtd t'))g(\wtd t')\|_{\mathcal W_\rho^{k,1}}\le & \quad C_0 \|u\cdot\nabla\w^b(\wtd t')\|_{\mathcal W^{k,1}_\rho }
+C_0\|\w(\wtd t')\|^2_{H^4(\lambda d(x,\pt\Omega)\ge \delta_0/2)}
\\&\quad +C_0\|\w(\wtd t')\|_{\mathcal W^{k,1}_\rho}\|\w(\wtd t')\|_{H^4(\lambda d(x,\pt\Omega)\ge \delta_0/2)}
\enda 
\eeq
which would then imply 
\[
\int_0^{\wtd t} \|\Gamma(\nu (\wtd t-\wtd t'))g(\wtd t')\|_{\cW_{\rho}^{2,1}} d \wtd t' \le C_0\lw(A(\beta)^2 + \beta^{-1} A(\beta)^2 (\rho_0-\rho-\beta t)^{-\zeta}\rw) .
\]
Here the constant $C_0$ may change from line to line. It remains to give the proof for the inequality \eqref{bdr term}.
First, by Proposition \ref{prop-Stokes}, we have 
\[
\|\Gamma(\nu(\wtd t-\wtd t'))g(\wtd t')\|_{\mathcal W_\rho^{k,1}}\le C_0\sum_{\al}|\al|^k|g_\al|e^{\eps_0(\delta_0+\rho)|\al|}, 
\]
where $g_\al$ is given by 
\[
g_\al=\lambda^{-1}\pt_{n}\triangle^{-1}(u\cdot\nabla\w)_{\al}|_{\pt\Omega} .
\]
Let $\Phi=\triangle^{-1}(u\cdot\nabla \w)$.  By definition, $\Phi$ solves
\[\begin{cases}
&\triangle \Phi=u\cdot\nabla \w,\qquad x\in \Omega\\
&\Phi|_{\pt\Omega}=0.
\end{cases}
\]
In the rescaled geodesic coordinates, we have $g_{\al}=\pt_{\wtd z}\Phi_{\al}(0)$.
Let $\Phi^b=\Phi(x)\phi^b(x)$, we have 
\[\begin{cases}
&\triangle \Phi^b=2\nabla_x\phi^b\cdot\nabla_x\Phi^b+\triangle\phi^b \Phi+\phi^b u\cdot\nabla \w\\
&\Phi^b|_{z=0}=0.
\end{cases} 
\]
By a direct calculation, we have
\[\bega 
&e^{\eps_0(\delta_0+\rho)|\al|}g_\al(\wtd t')=\pt_{z}\Phi_{\al}^b|_{\wtd z=0}\\
&=\int_0^\infty e^{|\al|(\eps_0(\delta_0+\rho)-\wtd z)}\lw\{\lambda^2 \lw(\wtd R_{\triangle}\wtd \Phi_b\rw)_\al (\wtd z)-\lambda^{-2}\lw(2\nabla_x\phi^b\cdot\nabla_x \Phi^b-\Phi\triangle\phi^b-\phi^b u\cdot\nabla \w\rw)_\alpha\rw\}d\wtd z\\
&=I_{1,\al}+I_{2,\al}+I_{3,\al}+I_{4,\al}.
\enda 
\]
\\~
\textbf{Treating $I_{1,\al}$.}~As in the proof of Proposition \ref{exp-realSW} for $\wtd R_{\triangle}$, we have 
\[\bega
|I_{1,\al}|\le &C_0 |\al|^2 \lambda^2 \int_0^\infty  e^{|\al|(\eps_0(\delta_0+\rho)-\wtd z)} |\wtd z \Phi^b_{\al}(\wtd z)|d\wtd z\\
&+C_0\lambda^2 \int_0^\infty e^{|\al|(\eps_0(\delta_0+\rho)-\wtd z)} \lw(|\al||\Phi^b_{\al}(\wtd z)|+|\pt_{\wtd z}\Phi_\al^b|\rw)d\wtd z .
\enda 
\]
First, we use the inequality $\wtd z|\al|e^{-|\al|\wtd z}\le e^{-\frac{1}{2}|\al|\wtd z}$ to get 
\[\bega 
|I_{1,\al}|&\le C_0\lambda^2 \int_0^\infty e^{|\al|\lw(\eps_0(\delta_0+\rho)-\frac{1}{2}\wtd z\rw)} \lw(|\al||\Phi^b_{\al}(\wtd z)|+|\pt_{\wtd z}\Phi_\al^b|\rw)d\wtd z\\
&\le C_0\lambda^2  \int_0^{\delta_0+\rho} e^{|\al|\eps_0(\delta_0+\rho-\wtd z)} \lw(|\al||\Phi^b_{\al}(\wtd z)|+|\pt_{\wtd z}\Phi_\al^b|\rw)d\wtd z\\
&\quad +C_0\lambda^2\int_{\delta_0+\rho}^\infty \lw(|\al||\Phi^b_{\al}(\wtd z)|+|\pt_{\wtd z}\Phi_\al^b|\rw)d\wtd z .
\enda 
\]
For the first term, we use the $L^1_\mu$ elliptic estimate for the velocity (since the kernel $K_\al\in L^1$), to get 
\beq\label{ineq1}
\bega 
&\sum_{\al}|\al|^k \int_0^{\delta_0+\rho} e^{|\al|\eps_0(\delta_0+\rho-\wtd z)} \lw(|\al||\Phi^b_{\al}(\wtd z)|+|\pt_{\wtd z}\Phi_\al^b|\rw)d\wtd z\\
&\le C \|\phi^b u\cdot\nabla \w\|_{\mathcal W_\rho^{k,1}}+C\|\Phi\|_{H^{k+2}(\lambda d(x,\pt\Omega)\ge \delta_0+\rho_0)}.\\
\enda\eeq
Now we have 
\beq\label{ineq3}
\bega 
\|\phi^bu\cdot\nabla \w\|_{\mathcal W^{k,1}_\rho}&=\|u\cdot\nabla \w^b-(u\cdot\nabla\phi^b)\w\|_{\mathcal W^{k,1}_\rho}\\
&\le C\lw(\|u\cdot\nabla\w^b\|_{\mathcal W^{k,1}_\rho}+\|u \w\|_{H^{k}(\lambda d(x,\pt\Omega)\ge \delta_0)}\rw)\\
&\le C \| u\cdot\nabla \w^b\|_{\mathcal W_\rho^{k,1}}+C\|\w\|_{H^4(\lambda d(x,\pt\Omega)\ge \delta_0/2)}\lw(\|\w\|_{H^4(\lambda d(x,\pt\Omega)\ge \delta_0/2)}+\|\w\|_{\mathcal W^{k,1}_\rho}\rw).
\enda 
\eeq
By standard elliptic estimate, we have 
\beq\label{ineq2}
\bega 
\|\Phi\|_{H^{k+2}(\lambda d(x,\pt\Omega)\ge \delta_0+\rho_0)} \le &C\|\phi^b u\cdot\nabla\w\|_{\mathcal W_\rho^{k,1}}+C\|u\cdot\nabla\w\|_{H^{k}(\lambda d(x,\pt\Omega)\ge \delta_0)}\\
\le &  C \| u\cdot\nabla \w^b\|_{\mathcal W_\rho^{k,1}}+\|\w\|^2_{H^4(\lambda d(x,\pt\Omega)\ge \delta_0/2)}+\|\w\|_{H^4(\lambda d(x,\pt\Omega)\ge \delta_0/2)}\|\w\|_{\mathcal W^{k,1}_\rho}.
\enda 
\eeq
Combining \eqref{ineq1},\eqref{ineq2} and \eqref{ineq3}, we have 
\[
\sum_{\al}|\al|^k |I_{1,\al}|\le C_0\lw(\|u\cdot\nabla\w^b(\wtd t')\|_{\mathcal W^{k,1}_\rho }+\|\w\|^2_{H^4(\lambda d(x,\pt\Omega)\ge \delta_0/2)}+\|\w\|_{H^4(\lambda d(x,\pt\Omega)\ge \delta_0/2)}\|\w\|_{\mathcal W^{k,1}_\rho}
\rw)\]
as claimed in \eqref{bdr term}. The proof for $I_{1,\al}$ is complete.~\\

~\\
\textbf{Treating $I_{2,\al}$.}~For $I_{2,\al}$, we note that the domain of integration is $\wtd z\ge \delta_0+\rho_0>\delta_0+\rho$, we have 
\[
|\al|^k e^{|\al|(\eps_0(\delta_0+\rho)-\wtd z)}\le C.
\]
Thus we have 
\[\bega 
\sum_{\al}|\al|^k|I_{2,\al}|&\le C\sum_{\al} \|\nabla_x \Phi^b_{\al}\|_{L^1(\wtd z\ge \delta_0+\rho_0)}\le C\|d(x,\pt\Omega)\nabla_x \Phi\|_{H^1(\lambda d(x,\pt\Omega)\ge \delta_0+\rho_0)}\\
&\le C\|d(x,\pt\Omega)\Phi\|_{H^2(\lambda d(x,\pt\Omega)\ge \delta_0+\rho_0)}\\
&\le C \|\phi^b u\cdot\nabla\w\|_{\mathcal W_{\rho}^{k,1}}+C\|u\cdot\nabla\w\|_{L^2(\lambda d(x,\pt\Omega)\ge \delta_0)},
\enda 
\]
which is bounded by the right hand side of \eqref{bdr term}. The proof for $I_{2,\al}$ is complete.

~\\
\textbf{Treating $I_{3,\al}$.}~Similarly, for $I_{3,\al}$, we get 
\[\bega 
\sum_{\al}|\al|^k|I_{3,\al}|&\le C \|d(x,\pt\Omega)\Phi\|_{H^1(\lambda d(x,\pt\Omega)\ge \delta_0+\rho_0)}\\
&\le C\|\phi^b u\cdot\nabla\w\|_{\mathcal W^{k,1}_\rho}+ C\|u\cdot\nabla\w\|_{L^2(\lambda d(x,\pt\Omega)\ge \delta_0)}.
\enda 
\]
This is also bounded by the right hand side of \eqref{bdr term}. The proof for $I_{3,\al}$ is complete.

~\\
\textbf{Treating $I_{4,\al}$.}~For $I_{4,\al}$ we have 
\[
\sum_{\al}|\al|^k |I_{4,\al}|\le \|\phi^b u\cdot\nabla\w\|_{\mathcal W^{k,1}_\rho} .
\]
We rewrite $\phi^bu\cdot\nabla \w=u\cdot\nabla(\phi^b\w)-u\cdot\nabla\phi^b \w=u\cdot\nabla \w^b-(u\cdot\nabla\phi^b)\w$. Hence we obtain 
\[\bega 
\sum_{\al}|\al|^k|I_{4,\al}|&\le C\lw(\|u\cdot\nabla\w^b\|_{\mathcal W^{k,1}_\rho}+\|u \w\|_{H^{k+1}(\lambda d(x,\pt\Omega)\ge \delta_0+\rho_0)}\rw)\\
&\le C \|u\cdot\nabla\w^b\|_{\mathcal W^{k,1}_\rho}+C\|\w\|^2_{H^4(\lambda d(x,\pt\Omega)\ge \delta_0/2)} +C \|\w\|_{\mathcal W^{k,1}_\rho}\|\w\|_{H^4(\lambda d(x,\pt\Omega)\ge \delta_0/2)}.\enda 
\]
This completes the bound for $I_{4,\al}$. ~\\

Combining all of the above, we obtain bounds on $A(\beta)$ in the analytic norm.

\subsection{Sobolev bounds away from the boundary}
Finally, we bound the vorticity away from the boundary. Recall that
\begin{equation}\label{vorticity-away}
\left\{
\begin{aligned}
\pt_t \w^i + u\cdot \nabla \omega^i &=\nu\triangle \w^i
\\
\w^i_{\vert{\partial\Omega}} & = 0
\end{aligned}\right.
\end{equation}
Note that by definition, $\omega^i$ vanishes in the region when $\lambda d(x,\partial \Omega) \le \delta_0$. We perform the standard energy estimates, for $k\ge 3$ so that the standard Sobolev embedding applies, yielding
$$
\begin{aligned}
\frac{d}{dt}\| \omega^i \|_{H^k}^2 + \nu \| \nabla \omega^i \|_{H^k}^2 &\lesssim \| u \|_{H^k} \| \omega^i\|_{H^k}^2
\\
&\lesssim \| \omega^i\|^3_{H^k} + \| u^b\|^3_{H^k(\lambda d(x,\partial \Omega)\ge \delta_0)}.
\end{aligned}$$
Using the elliptic theory for the Biot-Savart law $u^b = \nabla^\perp \Delta^{-1} \omega^b$, we have
$$ \| u^b\|_{H^k(\lambda d(x,\partial \Omega)\ge \delta_0)} \lesssim \| \omega^b \|_{\cW^{k,1}_\rho} + \| \omega^b\|_{H^k(\lambda d(x,\partial\Omega) \ge \delta_0)} .$$
This proves that
$$
\begin{aligned}
\frac{d}{dt} \| \omega^i \|_{H^k}^2
&\lesssim \| \omega^b \|^3_{\cW^{k,1}_\rho} + \| \omega^b\|^3_{H^k(\lambda d(x,\partial \Omega)\ge \delta_0)}.
\end{aligned}$$
Integrating in time and recalling the iterative norm $A(\beta)$, we arrive at
$$ \| \omega^i \|_{H^4}^2 \lesssim \| \omega_0 \|_{H^4}^2 + T A(\beta)^2.$$
This bounds the Sobolev norm in $A(\beta)$, completing the proof of Proposition \ref{prop-key}.

{

\subsection{Proof of Theorem \ref{theo-main}}\label{sec-proof}

Finally, we show that our main theorem, Theorem \ref{theo-main}, follows from Proposition \ref{prop-key}. Indeed, taking $\beta$ sufficiently large in Proposition \ref{prop-key}, we obtain uniform bounds on the iterative norm \eqref{def-normw} in term of initial data, which gives the local solution in $\cW^{1,1}_\rho + H^4(\{\lambda d(x,\partial\Omega)\ge \delta_0/2\})$ for $t \in [0,T]$, with $T = \beta^{-1}\lambda^{-2}\rho_0$. In particular, by definition of the iterative norm $A(\beta)$, we have 
$$\|\omega(t)\|_{\mathcal{W}^{1,1}_\rho}  +  \|\omega(t)\|_{H^4(\{\lambda d(x,\partial\Omega)\ge \delta_0/2\})} \le C_0 $$
for $t\in [0,T]$. To prove the stated bound \eqref{vorticity-bd} on vorticity, we note that 
$$
\begin{aligned}
\| \omega\|_{L^\infty(\partial \Omega)} &\lesssim \| \partial_{\wtd z} \omega\|_{\cL_{\rho}^1} + \|\omega(t)\|_{H^2(\{\lambda d(x,\partial\Omega)\ge \delta_0/2\})} .
\end{aligned}$$
It thus suffices to prove that $\| \partial_{\wtd z} \omega\|_{\cL_{\rho}^1} \lesssim \nu^{-1/2}$. Indeed, similar to \eqref{est-wbdry}, we bound  
$$
\| \partial_{\wtd z} \omega(\wtd t)\|_{\cL_{\rho}^{1}} \le \| \partial_{\wtd z} e^{\nu \wtd t S} \omega_{0}\|_{\cL_{\rho}^{1}} + \int_{0}^{\wtd t} \| \partial_{\wtd z} e^{\nu(\wtd t-\wtd t')S} f(\wtd t')\|_{\cL_{\rho}^{1}} \; d\wtd t' + \int_0^{\wtd t} \| \partial_{\wtd z}\Gamma(\nu (\wtd t-\wtd t'))g(\wtd t')\|_{\cL_{\rho}^{1}} \;d\wtd t'
$$
for the same $f,g$ defined as in \eqref{forcingStokes1}. It follows directly from the construction, see Section \ref{sec-realStokes}, that the $\wtd z$-derivative of the semigroup $\partial_{\wtd z} e^{\nu \wtd t S}$ satisfies the same bounds as does $e^{\nu \wtd t S}$, up to an extra factor of $(\nu \wtd t)^{-1/2}$ or $|\partial_{\wtd \theta}| + \nu^{-1/2}$. Therefore,  using the previous bounds on $f(\wtd t)$, we have 
$$
\begin{aligned}
 \int_{0}^{\wtd t} \| \partial_{\wtd z}  e^{\nu(\wtd t-\wtd t')S} f(\wtd t')\|_{\cL_{\rho}^{1}} \; d\wtd t' 
 &\lesssim  \int_{0}^{\wtd t}  (\nu(\wtd t-\wtd t'))^{-1/2}\Big[ \| f(\wtd t')\|_{\cW_{\rho}^{1,1}} + \|\wtd z f(\wtd t') \|_{H^{1}(\wtd z \ge \delta_0 + \rho)} \Big] \; d\wtd t' 
\\
&\lesssim  \int_{0}^{\wtd t}  (\nu(\wtd t-\wtd t'))^{-1/2} \; d\wtd t' 
\\&\lesssim \nu^{-1/2}. 
\end{aligned}$$
Other terms are estimated similarly, giving $\| \partial_{\wtd z} \omega\|_{\cL_{\rho}^1} \lesssim \nu^{-1/2}$ as claimed.  
}


\begin{thebibliography}{99}

\bibitem{Anderson}
C.~R. Anderson.
\newblock Vorticity boundary conditions and boundary vorticity generation for
  two-dimensional viscous incompressible flows.
\newblock {\em J. Comput. Phys.}, 80(1):72--97, 1989.


\bibitem{Asano} K. Asano,
Zero-viscosity limit of the incompressible Navier-Stokes equation. II.
Mathematical analysis of fluid and plasma dynamics, I (Kyoto, 1986). No. 656 (1988), 105-128.

{
\bibitem{BB} C. Bardos and S. Benachour, Domaine d'analycit\'e des solutions de l'\'equation d'Euler dans un ouvert de , Ann. Scuola Norm. Sup. Pisa Cl. Sci. (4) 4 (1977), no. 4, 647-687 (French). 
}

\bibitem{BT13} C. Bardos and E.~S.~ Titi,   Mathematics and turbulence: where do we stand? {\it J. Turbul.} {\bf 14} (2013), 42--76.

\bibitem{BT21} C. Bardos and E.~S.~ Titi,  $C^{0,\alpha}$ boundary regularity for the pressure in weak solutions of the 2d Euler equations.
{\it Philosophical Transactions of the Royal Society  A,} (2021), to appear.

\bibitem{Caflisch}
R.~E. Caflisch.
\newblock A simplified version of the abstract {C}auchy-{K}owalewski theorem
  with weak singularities.
\newblock {\em Bull. Amer. Math. Soc. (N.S.)}, 23(2):495--500, 1990.


\bibitem{GMM1} D. G\'erard-Varet, Y. Maekawa, and N. Masmoudi,
Gevrey stability of Prandtl expansions for 2-dimensional Navier-Stokes flows.
{\em Duke Math. J.} 167 (2018), no. 13, 2531-2631.

\bibitem{GMM2} D. G\'erard-Varet, Y. Maekawa, and N. Masmoudi, Optimal Prandtl expansion around a concave boundary layer,
arXiv:2005.05022 (2020)

\bibitem{Grenier} E. Grenier, On the nonlinear instability of Euler and Prandtl equations,
{\em Comm. Pure Appl. Math.} 53 (2000), 1067-1091.

\bibitem{GGN1} E. Grenier, Y. Guo, and T. Nguyen, Spectral instability of characteristic boundary layer flows,
{\em Duke Math. J.} 165 (2016), 3085-3146.

\bibitem{GGN2} E. Grenier, Y. Guo, and T. Nguyen, Spectral instability of general symmetric shear flows in a two-dimensional channel,
{\em Adv. Math.} 292 (2016), 52-110.

\bibitem{GrenierNguyen} E. Grenier and T. Nguyen, $L^\infty$ instability of Prandtl layers.
{\em Ann. PDE} 5 (2019), no. 2, Paper No. 18, 36 pp.

\bibitem{GrenierNguyen2} E. Grenier and T. Nguyen, On nonlinear instability of Prandtl's boundary layers: the case of Rayleigh's stable shear flows.
arXiv:1706.01282 (2017)

\bibitem{GrenierNguyen3} E. Grenier and T. Nguyen,
Generator functions and their applications.
{\em Proc. Amer. Math. Soc. Ser. B} 8 (2021), 245-251.


\bibitem{CanS}
M.  Lombardo, M. Cannone, and M. Sammartino,
Well-posedness of the boundary layer equations.
{\em SIAM J. Math. Anal.} 35 (2003), no. 4, 987-1004.


\bibitem{KA83} T.~Kato, Remarks on zero viscosity limit for nonstationary Navier-Stokes flows with boundary, {\it Seminar on Nonlinear Partial Differential Equations (Berkeley, Calif., 1983), Math. Sci. Res. Inst. Publ.,} Vol. 2, Springer, New York, 1984, pp. 85-98.

{ 
\bibitem{KV11}I. ~Kukavica and V.~ Vicol, On the analyticity and Gevrey-class regularity up to the boundary for the Euler equations.
{\em Nonlinearity} 24 (2011), no. 3, 765-796.
}

\bibitem{KNVW}  I. ~Kukavica, T. T. Nguyen, V.~ Vicol and F. Wang, On the Euler+Prandtl expansion for the Navier-Stokes equations, {\em Journal of Mathematical Fluid Mechanics}, to appear.

\bibitem{KVW20}  I. ~Kukavica,  V.~ Vicol and F. Wang, The inviscid limit for the Navier-Stokes equations with data analytic only near the boundary.{\it  Arch. Ration. Mech. Anal. } {\bf 237} (2020), no. 2


\bibitem{Maekawa} Y.~Maekawa,  On the inviscid limit problem of the vorticity equations for viscous incompressible flows in the half-plane.
{\em Comm. Pure Appl. Math.} 67(7), 1045-1128(2014).

\bibitem{NN18} T.T. Nguyen and T.T. Nguyen. The inviscid limit of Navier-Stokes equations for analytic data on the half-space.
{\em Arch. Ration.Mech. Anal.,} 230(3):1103-1129, 2018.

\bibitem{CS98}  M. Sammartino and R. Caflisch,  . Zero viscosity limit for analytic solutions of the Navier-Stokes equation on a half-space. II. Construction of the Navier-Stokes solution.
{\em Comm. Math. Phys.} 192 (1998), no. 2, 463-491.

\bibitem{WW20} C.~ Wang and Y. Wang Zero-Viscosity Limit of the Navier-Stokes Equations in a Simply-Connected Bounded Domain Under the Analytic Setting, {\em J. Math. Fluid Mech.} (2020)


\end{thebibliography}
\end{document}